\numberwithin{equation}{section}
\newenvironment{fsa}[1][auto]{\begin{tikzpicture}[->,>=stealth',
    shorten >=1pt,auto,node distance=3cm,double distance between line centers=0.45ex,
    initial text=,accepting/.style=accepting by arrow,
    every state/.style={inner sep=3pt,minimum size=0pt,fill=gray,text=white,draw=none},
    every loop/.style={looseness=12},semithick,#1]}{\end{tikzpicture}}
\newenvironment{dualmoore}[1][auto]{\begin{tikzpicture}[->,>=stealth',
    shorten >=1pt,auto,node distance=2cm,double distance between line centers=0.45ex,
    initial text=,accepting/.style=accepting by arrow,
    every state/.style={rectangle,inner sep=3pt,minimum size=0pt},
    every loop/.style={looseness=12},semithick,#1]}{\end{tikzpicture}}
\newcommand*{\rom}[1]{\expandafter\@slowromancap\romannumeral #1@}
\newtheorem{mainthm}{Theorem}
\renewcommand{\themainthm}{\Alph{mainthm}}
\newcommand\mm{{\mathbf M}}
\begin{document}
\title{The word and order problems for self-similar and automata groups}
\author{Laurent Bartholdi}
\author{Ivan Mitrofanov}
\date{October 25, 2017}
\address{D\'epartement de Math\'ematiques et Applications, \'Ecole Normale Sup\'erieure, Paris \textit{and} Mathematisches Institut, Georg-August Universit\"at zu G\"ottingen}
\email{laurent.bartholdi@gmail.com}
\email{phortim@yandex.ru}

\thanks{This work is supported by the ``@raction'' grant ANR-14-ACHN-0018-01}
\begin{abstract}
  We prove that the word problem is undecidable in functionally
  recursive groups, and that the order problem is undecidable in
  automata groups, even under the assumption that they are
  contracting.
\end{abstract}
\maketitle

\section{Introduction}
Let $A$ be a finite set, and consider a group $G$ acting faithfully
and ``self-similarly'' on the set $A^*$ of words over $A$. This means
that every $g\in G$ acts in the form
\begin{equation}\label{eq:action}
  (a_1\dots a_n)^g=a'_1(a_2\dots a_n)^{g'}
\end{equation}
for some $a'_1\in A$ and some $g'\in G$ depending only on $a_1,g$; we
encode them as $(g',a'_1)=\overline\Phi(a_1,g)$ for a map
$\overline\Phi\colon A\times G\to G\times A$. If furthermore $G$ is
finitely generated (say by a finite set $S$, so $G$ is a quotient
$F_S\twoheadrightarrow G$ of the free group on $S$), then its action
may be described by finite data, namely a lift
$\Phi\colon A\times S\to F_S\times A$ of the restriction of
$\overline\Phi$ to the generators of $G$. A finitely generated group
given in this manner is called \emph{functionally
  recursive}~\cite{brunner-s:auto}*{\S3}, or \emph{self-similar}; we
call $G$ the group \emph{presented} by $\Phi$, and write
$G=\langle\Phi\rangle$. We call $\Phi$ an (asynchronous)
\emph{transducer}.

Large classes of finitely generated groups can be presented as
functionally recursive ones; notably, all the ``iterated monodromy
groups'' of Nekrashevych~\cite{nekrashevych:ssg}, and the automata
groups mentioned in~\S\ref{ss:automata} below.

Even though the map $\Phi$ completely determines the action of $G$,
and therefore $G$ itself, it is unclear how much of $G$ is known from
$\Phi$. Our first result is as negative as can be:
\begin{mainthm}\label{thm:wp}
  There is no algorithm that, given
  $\Phi\colon A\times S\to F_S\times A$ and $s\in S$, determines
  whether $s=1$ in $\langle\Phi\rangle$.
\end{mainthm}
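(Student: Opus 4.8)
The plan is to reduce the halting problem for Turing machines to the \emph{non}-triviality of a generator, exploiting the fact that the section words produced by $\Phi$ may grow without bound and thus serve as unbounded memory. First I would reformulate the word problem geometrically: since $\langle\Phi\rangle$ acts faithfully on $A^*$, a generator $s$ equals $1$ if and only if it acts trivially, that is, if and only if every section $s|_w$ (for $w\in A^*$) induces the trivial permutation $\sigma_{s|_w}\in\operatorname{Sym}(A)$ on the first letter. Consequently ``$s\neq 1$'' is semi-decidable---enumerate the words $w$, run $\Phi$ along each to compute $s|_w$, and stop if some $\sigma_{s|_w}$ is nontrivial---so the word problem is co-r.e., and it suffices to reduce an r.e.-complete problem, such as halting, to the predicate ``$s\neq 1$''.

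Next I would build the simulation. The section map $w\mapsto s|_w$ is governed by the cocycle $(g h)|_a=g|_a\,h|_{a^{\sigma_g}}$, so that partial products of the root permutations of the letters of a section word feed into the sections of its suffixes. The binary odometer, given by the recursion $a=(1,a)\sigma$ with $\sigma$ the transposition of $A=\{0,1\}$, already shows that this mechanism implements \emph{carry propagation}, a genuinely context-sensitive operation, purely through a nontrivial root permutation. Generalising this prototype, I would encode the configurations of a fixed Turing machine $M$ (its tape, head, and state) as words over a finite generating set $S$, let the depth in the tree play the role of elapsed time, and design $\Phi$ so that passing from a node $w$ to its relevant child applies exactly one transition of $M$ to the encoded configuration $s|_w$, the local rewriting being realised through the cocycle in the same way the odometer realises carries.

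The equivalence would then be engineered as follows. I would arrange that every configuration reached during a legal, not-yet-halted run yields the trivial \emph{total} root permutation $\sigma_{s|_w}=1$---even though the individual generators carry nontrivial permutations, their product along a valid configuration word telescopes to the identity---so that no such node witnesses nontriviality; I would route malformed or off-computation branches to the identity element, all of whose sections and permutations are trivial; and I would make a halting configuration the unique place where the total permutation is a nontrivial transposition. With this in place, $s=1$ if and only if $M$ never halts, and the undecidability of halting gives the theorem.

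The hard part will be securing soundness and completeness \emph{simultaneously} inside the rigid self-similar format. On one hand the partial permutations produced by the cocycle must actually drive the head moves and cell rewrites; on the other hand they must cancel to the identity at \emph{every} node visited during a non-halting computation and on every auxiliary branch, since a single spurious nontrivial $\sigma_{s|_w}$ would falsely report $s\neq1$. Reconciling this tension---needing nontrivial local permutations to compute, yet trivial global permutations to keep $s=1$ throughout the run---while closing the recursion with finitely many generators and a fixed alphabet (so that the unboundedly long configuration words and all their sections remain images of $\Phi$) is the crux. I expect to resolve it by a telescoping encoding modelled on the odometer, verifying triviality node-by-node via the cocycle identity.
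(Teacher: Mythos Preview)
Your high-level strategy---reduce halting to the word problem by letting section words carry machine configurations, with depth in the tree playing the role of time---matches the paper's, and your diagnosis of the crux (nontrivial local permutations are needed to compute, yet the total root permutation must vanish at every non-halting node \emph{and} on every off-path branch) is exactly right. Where your proposal is thin is in the resolution: ``a telescoping encoding modelled on the odometer'' and ``route malformed branches to the identity'' are the whole difficulty, and you have not said how to do either while keeping the construction finite.

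The paper's device is rather different from a direct telescoping and is worth knowing. First, it simulates two-counter Minsky machines rather than Turing machines, encoding the configuration $(s,m,n)$ by the short word $s\,x^{2^m}y^{2^n}$; this avoids having to push an entire tape through the cocycle. Second, and crucially, it never asks the configuration word itself to have trivial root permutation. Instead it introduces a \emph{probe} generator $t$ whose only nontrivial action is to swap two dedicated ``halt'' letters, and a \emph{localiser} $u$ supported on $\{0,1\}^*$; the element tested for triviality is the commutator
\[
  \bigl[(s_*xy)\,t\,(s_*xy)^{-1},\;u\bigr].
\]
Along the ray $0^\infty$ the section of the conjugate $g=(s_*xy)\,t\,(s_*xy)^{-1}$ at depth $k$ is again of the form $(\text{config}_k)\,t\,(\text{config}_k)^{-1}$, and since $t$ is inert off the halt letters, the root permutation at that node automatically fixes $0$ unless the halt state has been reached. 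Off the ray there is nothing to telescope: every generator except $u$ is simply declared to satisfy $\Phi_\mm(1,g)=(\epsilon,1)$, so any section meeting a $1$ collapses to $\epsilon$. Hence if the machine never halts, $g$ fixes all of $\{0,1\}^\infty$, its support is disjoint from that of $u$, and $[g,u]=1$; if it halts, an explicit orbit computation shows $[g,u]\neq1$.

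So the paper separates the three concerns you tried to handle simultaneously: simulation (the conjugating word), halt detection (the probe $t$), and passage from ``fixes a ray'' to ``acts trivially everywhere'' (the commutator with $u$). Your direct approach may be salvageable, but as written it does not explain how to introduce a single nontrivial permutation at the halting step without contaminating earlier levels, nor how the off-path sections of a long configuration word---computed via the cocycle from letters that themselves carry nontrivial permutations---can be forced to equal $1$ in $G$.
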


\subsection{Automata groups}\label{ss:automata}
Assume now that $G$ is a functionally recursive group, and that in the
action~\eqref{eq:action} the elements $g'$ have at most the length of
$g$, in the generating set $S$. Then, up to replacing $S$ by
$S\cup S^{-1}\cup\{1\}$, the map $\Phi$ takes the form
$\Phi\colon A\times S\to S\times A$; we call it a \emph{finite state
  transducer}. The group $G$ is called an \emph{automata group}; these
form a notorious class of groups, containing all finitely generated
linear groups as well as infinite torsion groups such as the
``Grigorchuk group''~\cite{grigorchuk:burnside} and ``Gupta-Sidki
groups''~\cite{gupta-s:burnside}. The Grigorchuk group is also a group
of intermediate word-growth, and was used to settle the Milnor problem
on group growth~\cite{grigorchuk:growth}.

The action of $S$, and of $G$ itself, may be conveniently described by
a finite labeled graph called its \emph{Moore diagram}. Consider the
directed graph $\Gamma$ with vertex set $S$ and an edge from $s$ to
$t$ labeled $(a,b)$ whenever $\Phi(a,s)=(t,b)$; then the action of
$s\in S$ on $A^*$ is determined as follows: given
$a_1\dots a_n\in A^*$, find the unique path in $\Gamma$ starting at
$s$ and whose first label letters read $a_1\dots a_n$; let
$b_1\dots b_n$ be the second label letters; then
$(a_1\dots a_n)^s=b_1\dots b_n$. See Figure~\ref{fig:grigorchuk} for
the graph $\Gamma$ describing the Grigorchuk group.

Every element of $G$ (say represented by a word $w$ of length $n$ in
$S$) admits a similar description, but now using a graph with vertex
set $S^n$. The word $w$ represents the identity in $G$ if and only if
at every vertex reachable from $w$ all the outgoing edges have labels
in $\{(a,a)\mid a\in A\}$. It follows that the word problem is
decidable in $G$, and even belongs to \textsc{LinSpace} (and therefore
to \textsc{ExpTime}); but that is about as much as is known. We
consider the ``order problem'' (determine the order of an element),
which was raised at the end of last century by Sidki and by
Grigorchuk, Nekrashevych and
Sushchansky~\cite{grigorchuk-n-s:automata}*{Problem 7.2.1(a)}, to
which Gillibert announced a solution in July 2017; his proof appears
in~\cite{gillibert:gporderpb}:
\begin{mainthm}\label{thm:torsion}
  There is no algorithm that, given
  $\Phi\colon A\times S\to S\times A$ and $s\in S$, determines the
  order of $s$ in $\langle\Phi\rangle$, namely the cardinality of
  $\langle s\rangle$.
\end{mainthm}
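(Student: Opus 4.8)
The plan is to reduce an undecidable problem to the order problem. First note one easy half: because the word problem \emph{is} decidable in automata groups, the predicate ``$s$ has finite order'' is semidecidable --- test $s,s^2,s^3,\dots$ and stop upon reaching the identity --- so the order problem and the \emph{torsion problem} (``is $s$ of finite order?'') are computationally equivalent. It therefore suffices to prove the torsion problem undecidable, and I shall do so by reducing the halting problem for a fixed Turing-complete machine $M$ (the same engine that drives Theorem~\ref{thm:wp}): from an instance I construct a synchronous transducer $\Psi$ and an element $t\in\langle\Psi\rangle$ with $t$ of finite order iff $M$ halts on that instance.

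The element $t$ simulates $M$ as an \emph{odometer on computations}. I take $M$ to be reversible (a harmless normalization) and encode each machine configuration as a word over $A$, with time running down the tree: the word read along a path records the configuration reached after that many steps. The single transition step of $M$ is a \emph{local} operation on such an encoding --- it touches only the head and its neighbours --- so ``advance every configuration by one step'' is realized by a finite Mealy automaton, that is, by a finite-state length-preserving tree automorphism $t$; every word that does not encode a legal configuration is simply fixed. Because one step of $M$ is local, each section of $t$ is again one of finitely many maps, so $\Psi$ has the synchronous form $A\times S\to S\times A$ demanded by the statement, rather than the asynchronous form of~\eqref{eq:action}. This is the whole point: the undecidable data, which in Theorem~\ref{thm:wp} lived in the unbounded sections $g'$, is here relocated into the unbounded \emph{depth} of the tree, leaving the automaton itself finite.

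It remains to match halting with torsion. An automorphism of $A^*$ has finite order exactly when some power fixes the entire tree, equivalently when it moves points at only finitely many levels. If $M$ halts from the designated start after $T$ steps, the computation enters an absorbing halting configuration; the only words moved by $t$ then lie in a single finite orbit of length $O(T)$ ending at a fixed point, and one checks that a suitable power of $t$ is the identity, so $t$ is torsion. If instead $M$ runs forever, the start configuration has an infinite forward orbit under the one-step map, so $t^{k}$ displaces the corresponding ray for every $k\geq1$ and $t$ has infinite order. The delicate direction is the second, together with the bookkeeping that \emph{no other} words create spurious infinite orbits: here reversibility of $M$, the convention of fixing every ill-formed word, and the absence of configurations with infinite backward history are what confine all nontrivial motion to the one computation being simulated.

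The main obstacle is precisely this faithful \emph{synchronous} simulation --- encoding an unbounded tape along the tree while keeping the mover $t$ finite-state, and proving the torsion/halting equivalence to be exact rather than merely one-directional. Once that is in place, the refinement promised in the abstract --- undecidability \emph{even for contracting groups} --- follows by arranging the simulation to be of \emph{bounded activity}, meaning that along any ray only finitely many sections of $t$ differ from the identity (the head visits each level finitely often). A bounded-activity self-similar group is contracting, with nucleus the finite set of occurring sections, so the torsion problem, and hence the order problem, remains undecidable on the class of contracting automata groups.
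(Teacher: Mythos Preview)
Your outline is closer in spirit to Gillibert's cellular-automaton argument than to the paper's, but as written it has two genuine gaps.

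The first is in the claimed equivalence ``$t$ has finite order $\Leftrightarrow$ $M$ halts on the designated instance''. The element $t$ is a single automorphism of all of $A^*$; for $t^k=1$ one needs \emph{every} ray to be periodic under $t$ with period dividing $k$, not merely the ray encoding your chosen start configuration. A finite-state transducer cannot recognise ``reachable from the designated start'', so well-formed configurations other than the designated one are inevitably present, and any non-halting run of $M$ from such a configuration already forces $t$ to have infinite order, irrespective of what happens on your instance. Reversibility does not help here: a configuration with an infinite (necessarily injective) backward history yields a bi-infinite orbit and hence again infinite order. To make a reduction of this shape go through one must start from a different undecidable problem --- global periodicity of a reversible machine rather than halting on a single input --- and that change of source problem, together with the bookkeeping you allude to, is precisely the missing work. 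The paper sidesteps the issue entirely by a different mechanism: the Minsky-machine configuration $(s,m,n)$ is stored in the \emph{group element} $s\,x^{2^m}y^{2^n}$ rather than in the tree, one machine step is realised by the recursion $C(s\,x^{2^m}y^{2^n})\to C(s'x^{2^{m'}}y^{2^{n'}})$ in a graph on symmetric conjugacy classes, and the alphabet is designed so that every other cycle of the element on $A$ lands in the trivial class.

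The second gap is fatal to your contracting claim. Your ``bounded activity'' condition --- only finitely many non-identity sections of $t$ along each ray --- implies, by K\"onig's lemma applied to the prefix-closed set $\{v: t@v\neq1\}$, that $t$ is a \emph{finitary} automorphism; finitary automorphisms always have finite order, so the condition is incompatible with ever producing an infinite-order $t$. More broadly, the paper explicitly cites the theorem of Bondarenko, Sidki and Zapata that the order problem is \emph{solvable} for bounded automata groups, so no route through bounded activity can possibly yield undecidability. The paper obtains contraction instead by post-composing $\Phi_\mm$ with auxiliary transducers $\Phi_0,\Phi_1,\dots,\Phi_\ell$ (Proposition~\ref{prop:contracting}) that shorten every sufficiently long word while preserving, up to conjugacy, the elements $s\,x^{2^m}y^{2^n}$ that carry the simulation.
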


Worse than that, the action is uncomputable in the following sense:
consider the natural extension of the action of $\langle\Phi\rangle$
to $A^\infty$. Then we have the following variants of
Theorems~\ref{thm:wp} and~\ref{thm:torsion}:
\setcounter{mainthm}{0}\renewcommand\themainthm{\Alph{mainthm}$'$}
\begin{mainthm}\label{thm:wp_orbit}
  There is no algorithm that, given
  $\Phi\colon A\times S\to F_S\times A$ and $a\in A$ and $s\in S$,
  determines whether $a^\infty$ is fixed by $s$.
\end{mainthm}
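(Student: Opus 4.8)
The plan is to reduce the halting problem to the fixed-point problem. Reading the self-similar recursion \eqref{eq:action} along the constant ray $a^\infty$, an element $s$ fixes $a^\infty$ if and only if, writing $s|_{a^n}$ for the section of $s$ at the vertex $a^n$, every section $s|_{a^n}$ fixes the first letter $a$ (i.e.\ its first-letter permutation does). Since $s|_{a^{n+1}}=(s|_{a^n})|_a$ and each section is again a word in $F_S$, the sequence $(s|_{a^n})_{n\ge0}$ is produced by iterating the single operation ``take the section at $a$''. I would first record that this operation is a \emph{finite-state transduction}: computing $u|_a$ for $u=t_1\cdots t_k$ with $t_i\in S$ amounts to scanning $u$ left to right while carrying a state in the finite set $A$ (the current tree-letter), emitting at step $i$ the word $t_i|_{c}$ prescribed by $\Phi$ and updating $c\mapsto c^{t_i}$; the output letter $a^u$ is exactly the terminal state. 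Thus the whole question becomes: starting from the one-letter word $s$, does this transducer, iterated indefinitely, always return to the state $a$?

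Next I would encode a Turing machine $M$ (running on empty input, for which halting is undecidable) into such a transducer $\Phi_M$. Configurations of $M$ are encoded as positive words over $S$, with a marker generator recording the head position and internal state, and I would choose the start generator $s$ so that $s|_a$ is the initial configuration. The heart of the construction is to arrange $\Phi_M$ so that one application of ``take the section at $a$'' sends the word encoding the time-$n$ configuration to the word encoding the time-$(n+1)$ configuration, i.e.\ so that one pass of the transducer performs exactly one step of $M$. A right-moving transition is a purely local left-to-right rewrite and is immediate; a left-moving transition I would implement by delaying the output by one symbol, which keeps the transducer finite-state. Finally I would set the terminal state, hence the output letter, to be $a$ after processing any non-halting configuration and to be some fixed $b\ne a$ exactly when a halting configuration is scanned (completing $\Phi_M$ arbitrarily on the pairs that never arise).

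With this in place the conclusion is immediate: the output letter $a^{s|_{a^n}}$ equals $a$ for every $n$ precisely when $M$ never reaches a halting configuration, so $s$ fixes $a^\infty$ if and only if $M$ does not halt; as halting is undecidable, so is the property in Theorem~\ref{thm:wp_orbit}. Well-definedness of the action on $A^\infty$ requires no separate argument: $\Phi_M$ determines a homomorphism from $F_S$ into the wreath recursion, so the sections in $F_S$ and the induced action on $a^\infty$ are canonical, and $\langle\Phi_M\rangle$ is its faithful quotient.

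The step I expect to be the main obstacle is the middle one: realizing a genuine Turing-machine step as a single pass of a finite-state transduction on the configuration word, while keeping the halting information legible in the lone terminal letter. The only subtlety beyond bookkeeping is the head's left-moves, handled by the one-symbol output delay above; increments, right-moves, and branching on the leading symbol are directly expressible because the sections $t|_c$ may be arbitrary words of $F_S$ — possibly empty, so that symbols can be deleted, or long, so that symbols can be inserted. I would also note that this runs parallel to the proof of Theorem~\ref{thm:wp}: there the nontriviality of $s$ is already witnessed on the single ray $a^\infty$, so one may alternatively read Theorem~\ref{thm:wp_orbit} off that construction, via $s\ne1\iff s$ moves $a^\infty\iff M$ halts.
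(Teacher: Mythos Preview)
Your approach is sound but differs from the paper's. The paper encodes \emph{Minsky machines} rather than Turing machines: the configuration $(s_i,m,n)$ is represented by the group element $s_i x^{2^m}y^{2^n}$, and the element tested on the ray $0^\infty$ is the conjugate $g=(s_*xy)\,t\,(s_*xy)^{-1}$, where $t$ is an auxiliary generator that toggles a dedicated ``halting'' letter. The key proposition then checks, instruction type by instruction type, an identity of the shape $0^k\cdot g = g'\cdot 0^k$ with $g'$ encoding the successor configuration; this is a short explicit computation on the dual Moore diagram.

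Your direct Turing simulation via the section-at-$a$ transduction is a legitimate alternative. It trades the paper's algebraic encoding (powers $x^{2^m}$, $y^{2^n}$ and the conjugation-by-$t$ trick that localises the halting test) for a configuration-as-word encoding with the standard one-symbol output delay for left moves. Each route has its bookkeeping cost: the paper's counter-swap instruction requires auxiliary generators and consumes several input letters, while your approach needs explicit end-of-tape markers to flush the delayed symbol and a careful design of $A$ so that the carry state returns to the distinguished letter $a$ precisely on non-halting passes. Your closing remark matches the paper's actual organisation: Theorem~\ref{thm:wp_orbit} is established first, and Theorem~\ref{thm:wp} is then deduced from it by adjoining one further generator $u$ and looking at the commutator $[g,u]$.
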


\begin{mainthm}\label{thm:torsion_orbit}
  There is no algorithm that, given
  $\Phi\colon A\times S\to S\times A$ and $a\in A$ and $s\in S$,
  determines the cardinality of the orbit of $a^\infty$ under
  $\langle s\rangle$.
\end{mainthm}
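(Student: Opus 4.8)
The plan is to reduce from the halting problem for reversible Turing machines (or reversible two-counter machines), following the construction underlying Theorem~\ref{thm:torsion} but tracking a single orbit rather than the whole group $\langle s\rangle$. The cardinality of the orbit of $a^\infty$ under $\langle s\rangle$ is the least $k\geq1$ with $(a^\infty)^{s^k}=a^\infty$, and is infinite when no such $k$ exists; so it suffices to build a finite-state transducer $\Phi\colon A\times S\to S\times A$ and a letter $a\in A$ for which iterating $s$ on $a^\infty$ simulates the successive configurations of a machine $M$ started on empty input, with $a^\infty$ encoding the initial configuration. Deciding even whether this orbit is finite would then decide whether $M$ halts.

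First I would fix a reversible machine $M$ with undecidable halting problem, and design $A$ so that an infinite word encodes an instantaneous configuration of $M$ (state, head position, tape or counter contents), the empty initial configuration being encoded by $a^\infty$. The generator $s$ is defined so that its action advances $M$ by one reversible step; because $M$ is reversible, $s$ is a bijection of $A^\infty$ and $s^{-1}$ runs $M$ backwards, so $s$ is a genuine invertible automaton and $\Phi$ lands in $S\times A$ as required. Using the self-similarity~\eqref{eq:action}, the unbounded tape or counter data is read off along the depth of the ray, so that $\Phi$ carries only bounded state information between successive letters; this is what keeps the transducer \emph{finite}-state, and, with the usual precautions, contracting.

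The crucial design point is to install a reset: when $M$ reaches its (unique) halting configuration, $s$ sends it back to the initial configuration $a^\infty$, closing the orbit into a single cycle. For $s$ to remain a bijection the initial configuration must be a source for the ordinary transition and the halting configuration a sink, the reset being the only edge joining them. Then the orbit of $a^\infty$ under $\langle s\rangle$ is finite, of cardinality equal to the number of steps of $M$ before halting, exactly when $M$ halts, and infinite otherwise. Hence determining this cardinality decides the halting problem for $M$, which is impossible; this proves Theorem~\ref{thm:torsion_orbit}. In particular the sub-question of whether the orbit has size $1$, i.e.\ whether $s$ fixes $a^\infty$, is already undecidable, and is the automaton counterpart of Theorem~\ref{thm:wp_orbit}.

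The main obstacle is that one cannot simply quote Theorem~\ref{thm:torsion}: the order of $s$ may be infinite while \emph{every} point of $A^\infty$ has a finite orbit, the order being the least common multiple of unboundedly many coprime finite cycle lengths, in which case no single orbit detects infinitude of the order. The whole point is thus to force the undecidable alternative onto one ray, so that the orbit of $a^\infty$ is infinite precisely when $M$ does not halt. Concretely this means the reset must be the only way the simulated computation can cycle, so that a non-halting run produces pairwise distinct configurations $a^\infty,(a^\infty)^s,(a^\infty)^{s^2},\dots$; checking this injectivity along the simulated run while respecting both reversibility and the finite-state constraint is the delicate part of the argument.
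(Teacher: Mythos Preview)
Your proposal is a plan rather than a proof: you describe what the transducer should accomplish but never construct it, and the phrases ``with the usual precautions'' and ``this is the delicate part of the argument'' stand in for the actual work. Two of the side claims are also wrong. First, a one-step simulation of a reversible machine by a synchronous transducer is typically \emph{not} contracting: the state reached after processing one letter carries essentially the same information as the original, so lengths do not shrink; obtaining contraction (Theorem~\ref{thm:contracting}) requires a separate layer on top of the basic construction. Second, in your setup the orbit of $a^\infty$ has size~$1$ exactly when $M$ halts in at most one step, which is trivially decidable, so your final sentence does not follow; you are conflating this with Theorem~\ref{thm:wp_orbit}, which concerns \emph{asynchronous} transducers $A\times S\to F_S\times A$ where output length may exceed input length.

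Your approach also diverges from the paper's, despite the opening claim to follow Theorem~\ref{thm:torsion}. You encode the machine configuration in the infinite word and let a single state $s$ advance it by one step. The paper does the opposite: the ray stays at $0^\infty$ throughout, and the configuration $(s_i,m,n)$ is encoded in the group element $s_i x^{2^m}y^{2^n}$. The recursion~\eqref{eq:action} is then used not to scan a tape but to halve or double the exponents of $x$ and $y$, so that the first-return map of $s_* x y$ to the letter~$0$ reproduces one Minsky-machine transition, each transition contributing a factor of~$3$ to the orbit length. Your route is closer in spirit to Gillibert's cellular-automaton simulation of Turing machines, and can in principle be made to work for reversible Turing machines on an infinite tape; but note that it fails outright for two-counter machines with the obvious unary encoding, since incrementing a counter is not length-preserving and therefore cannot be realised by any map $A\times S\to S\times A$.
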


\noindent Finally, the results in Theorems~\ref{thm:wp}
and~\ref{thm:torsion} can be made uniform as follows:
\setcounter{mainthm}{0}\renewcommand\themainthm{\Alph{mainthm}$''$}
\begin{mainthm}\label{thm:wp_uniform}
  There is a functionally recursive group $\langle\Phi\rangle$ with
  $\Phi\colon A\times S\to F_S\times A$ such that
  $\{s\in F_S\mid s=1\text{ in }\langle\Phi\rangle\}$ is not
  recursive.
\end{mainthm}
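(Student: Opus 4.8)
The plan is to prove Theorem A'' by upgrading the non-uniform undecidability of Theorem A into a uniform statement: instead of encoding a single Turing machine computation into a transducer, I would encode a fixed universal machine (or a fixed finitely-presented group with unsolvable word problem) once and for all, so that a single transducer $\Phi$ does the job for all inputs simultaneously. The key reduction is that Theorem A already gives, for each instance, a transducer and a generator whose triviality is undecidable; to make this uniform I need the construction of that proof to depend \emph{computably and uniformly} on the instance, and then to absorb the instance into the \emph{word} $s\in F_S$ rather than into the machine $\Phi$.

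Concretely, I would start from a fixed recursively enumerable but non-recursive set $L\subseteq\mathbb N$ — say the halting set of a fixed universal Turing machine $U$ — and build one transducer $\Phi\colon A\times S\to F_S\times A$ whose group $\langle\Phi\rangle$ simulates $U$ on an arbitrary input that is itself fed in as a word over $S$. The crux is to arrange an injective computable encoding $n\mapsto w_n\in F_S$ such that $w_n=1$ in $\langle\Phi\rangle$ if and only if $n\in L$ (or its complement). Then $\{s\in F_S\mid s=1\text{ in }\langle\Phi\rangle\}$ cannot be recursive: a decision procedure for it would, composed with the computable map $n\mapsto w_n$, decide membership in $L$, contradicting non-recursiveness of $L$. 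The first step, therefore, is to revisit the mechanism behind Theorem A and isolate how the Turing machine's description enters the transducer, with the goal of moving that description out of $\Phi$ and into the input word.

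The main obstacle I expect is precisely this separation: in the natural construction for Theorem A, the transducer $\Phi$ encodes the transition table of a specific machine, so the machine lives in the states $S$ and the alphabet $A$, not in any word. To get uniformity I must instead build a transducer that is \emph{universal} — reading an encoded program and its input off the tape $A^*$ and executing it self-similarly. This requires that the self-similar recursion faithfully implement arbitrary head movements, tape rewriting, and state transitions of $U$ while keeping $A$ and $S$ fixed and finite, with the input program/input pair injected through the generator word $s=w_n$. I would handle this by letting $w_n$ act as a preparatory transformation that writes the encoding of $(U,n)$ onto an initial segment of the tape, after which the fixed "execution" generators in $S$ drive the simulation; triviality of $w_n$ would then track whether the simulated computation reaches an accepting (identity-producing) configuration, i.e.\ whether $n\in L$.

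Finally, I would verify that the reduction has the right logical form: $L$ being r.e.\ but not recursive makes $\{s\mid s=1\}$ non-recursive, which is exactly the assertion of Theorem A''. It is worth noting that the one-sidedness is automatic — since the word problem is always at least co-r.e.\ (one can enumerate the finitely many configurations reachable from $w$ in the Moore-type description and detect a nontrivial label), the set $\{s\mid s=1\}$ is co-r.e., so non-recursiveness forces it to be not r.e., matching the undecidability already guaranteed by Theorem A. The bulk of the work, and the genuinely delicate part, is the construction of the single universal self-similar transducer and the encoding map $n\mapsto w_n$; once those are in place the non-recursiveness conclusion is a routine diagonal/reduction argument.
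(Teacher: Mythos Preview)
Your overall strategy---fix a universal machine once and for all, and push the instance into the word $w_n\in F_S$ rather than into $\Phi$---is exactly right, and the reduction argument at the end is fine. But you are overestimating the work required. You write that ``in the natural construction for Theorem~A, the transducer $\Phi$ encodes the transition table of a specific machine, so the machine lives in the states $S$ and the alphabet $A$, not in any word,'' and conclude that a new universal self-similar transducer must be built from scratch, with $w_n$ ``writing the encoding of $(U,n)$ onto an initial segment of the tape.'' This is the step to reconsider.

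Look back at the mechanism behind Theorem~A (Propositions~\ref{prop:wp_orbit} and~\ref{prop:wp}): the Minsky configuration $(s,m,n)$ is encoded by the group word $s\,x^{2^m}y^{2^n}$, and the element tested for triviality is $[(s_* x y)\,t\,(s_* x y)^{-1},u]$, where $s_* x y$ encodes the initial configuration $(s_*,0,0)$. The transition table of $\mm$ is indeed hard-wired into $\Phi_\mm$, but the \emph{initial counter values} already live in the word, namely in the exponents of $x$ and $y$. Now simply take $\mm=\mm_u$ a universal Minsky machine, for which $\{n\mid \mm_u\text{ halts from }(s_*,0,n)\}$ is non-recursive (Proposition~\ref{prop:minsky}(2)). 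The single transducer $\Phi_{\mm_u}$ is fixed once and for all; the input $n$ enters only through the word
\[
w_n\coloneqq\bigl[(s_* x y^{2^n})\,t\,(s_* x y^{2^n})^{-1},\,u\bigr],
\]
and by Proposition~\ref{prop:wp} one has $w_n=1$ in $\langle\Phi_{\mm_u}\rangle$ if and only if $\mm_u$ does not halt from $(s_*,0,n)$. The map $n\mapsto w_n$ is computable, so $\{s\in F_S\mid s=1\}$ is not recursive. No new transducer construction is needed; once Theorem~A is in hand the proof is a few lines. Your proposed route---building a transducer that reads an encoded program off the tape---would presumably also work, but it replaces a one-paragraph observation by a substantial and delicate construction.
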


\begin{mainthm}\label{thm:torsion_uniform}
  There is an automata group $\langle\Phi\rangle$ with
  $\Phi\colon A\times S\to S\times A$, and two states $s,t\in S$, such
  that the set $\{n\in\N\mid s t^n\text{ has finite order}\}$ is not
  recursive.
\end{mainthm}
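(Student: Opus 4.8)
The plan is to turn the construction behind Theorem~\ref{thm:torsion} into a \emph{uniform} one by freezing the simulated machine once and for all and letting the integer parameter~$n$ be supplied by the exponent in~$st^n$ rather than by the choice of transducer. Recall that the proof of Theorem~\ref{thm:torsion} attaches to a machine~$M$ a finite-state transducer~$\Phi_M$ together with a state~$s_M$ whose order in $\langle\Phi_M\rangle$ is finite precisely when the computation of~$M$ terminates, so that the undecidability is inherited from the halting problem. Nothing in that recipe forbids taking~$M$ to be a fixed \emph{universal} machine~$U$; the point is then to feed the input to~$U$ from inside the group, using powers of a single extra state~$t$. Writing
\[
  H=\{\,n\in\N\mid U\text{ halts on input }n\,\},
\]
which is recursively enumerable but not recursive, the theorem will follow once we exhibit~$s,t$ for which $st^n$ has finite order exactly on~$H$.

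Concretely, I would enlarge the state set so that~$t$ behaves like an odometer-type loader: each application of~$t$ advances an ``input register'' recorded along a distinguished ray (or subtree) of~$A^*$, in such a way that~$t^n$ presents the register holding the value~$n$, while~$s$ installs the head of~$U$ in its initial state reading that register. By the same bookkeeping as in Theorem~\ref{thm:torsion}, the combined transducer~$st^n$ is then, by construction, a transformation of~$A^*$ that simulates the run of~$U$ on input~$n$, and it has finite order if and only if that run halts (or fails to halt, depending on the orientation of the dictionary fixed in the proof of Theorem~\ref{thm:torsion}). Either way,
\[
  \{\,n\in\N\mid st^n\text{ has finite order}\,\}\in\{H,\ \N\setminus H\},
\]
and since both~$H$ and its complement are non-recursive, this set is non-recursive, which is the assertion of Theorem~\ref{thm:torsion_uniform}.

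The main obstacle is the design of~$t$ together with the proof that its powers encode \emph{arbitrarily large} inputs through a single \emph{fixed} finite transducer. In the non-uniform Theorem~\ref{thm:torsion} the whole input is hard-wired into~$\Phi_M$, so no unbounded information ever has to be produced by iteration; here the self-similar recursion itself must count the exponent and hand it to the simulation, which forces~$t$ to push its contribution steadily deeper down the tree so that~$\Phi$ remains finite. I expect to have to verify three compatible facts: that the sections of~$t^n$ genuinely realise the configuration ``$U$ started on input~$n$''; that composing with~$s$ converts this register into a running head without corrupting it; and, most delicately, that the equivalence between finiteness of order and halting survives \emph{uniformly} in~$n$, i.e.\ that a non-terminating run of~$U$ always yields an element with nontrivial sections at unbounded depth, hence of infinite order, no matter how large~$n$ is. Establishing this uniform torsion/halting dictionary is the technical heart; once it is in place, the reduction from~$H$ is immediate.
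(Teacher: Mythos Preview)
Your high-level plan --- fix a universal machine and feed its input through the exponent of a single generator --- is exactly right, but you have misjudged where the work lies. In the construction for Theorem~\ref{thm:torsion} the counter values are \emph{not} hard-wired into $\Phi_\mm$: the proof there establishes, uniformly in $m,n$ and in the state $s_i$, that the element $s_i\,x^{2^m}y^{2^n}$ has finite order in $\langle\Phi_\mm\rangle$ if and only if $\mm$ halts when started from $(s_i,m,n)$. The special case $s_*xy$ singled out in the proposition statement is just the instance $m=n=0$; the analysis of the conjugacy-class graph treats all $s_i\,x^{2^m}y^{2^n}$ at once.

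Consequently no new ``odometer-type loader'' needs to be designed: the generator $y$ already plays that role. For a fixed universal Minsky machine $\mm_u$, set $s=s_*x$ and $t=y$ in $\langle\Phi_{\mm_u}\rangle$. Then $st^{2^n}=s_*xy^{2^n}$ has finite order precisely when $\mm_u$ halts from $(s_*,0,n)$, and since $\{2^n:\mm_u\text{ halts on }n\}$ is non-recursive, the full set $\{k:st^k\text{ has finite order}\}$ cannot be recursive either (otherwise its restriction to powers of $2$ would decide the halting set). Note in particular that one does not get, and does not need, an equality of this set with $H$ or $\N\setminus H$ as you wrote; only that deciding it would decide the halting set on the subsequence $k=2^n$.

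So the ``main obstacle'' you anticipate --- building a finite-state $t$ whose powers load unbounded inputs, and checking the torsion/halting dictionary uniformly in $n$ --- is already discharged inside the proof of Theorem~\ref{thm:torsion}. The uniform theorem is a two-line corollary once you recognise this.
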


\begin{figure}
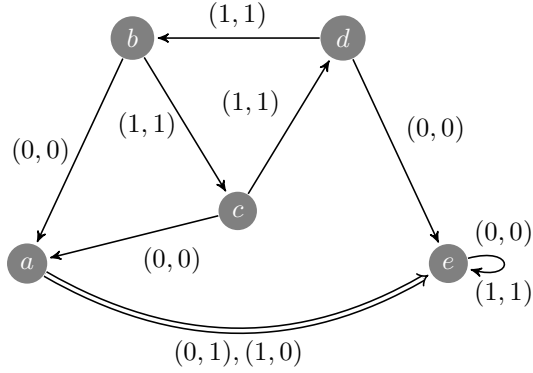

\begin{center}
\begin{fsa}[scale=1]
  \node[state] (b) at (1.4,3) {$b$};
  \node[state] (d) at (4.2,3) {$d$};
  \node[state] (c) at (2.8,0.7) {$c$};
  \node[state] (a) at (0,0) {$a$};
  \node[state] (e) at (5.6,0) {$e$};
  \path (b) edge node[left] {$(1,1)$} (c) edge node[left] {$(0,0)$} (a)
        (c) edge node {$(1,1)$} (d) edge node {$(0,0)$} (a)
        (d) edge node[above] {$(1,1)$} (b) edge node {$(0,0)$} (e)
        (a) edge[-implies,double,bend right=30] node[below] {$(0,1),(1,0)$} (e)
        (e) edge[loop right] node[above=1mm] {$(0,0)$} node[below=1mm] {$(1,1)$} (e);
\end{fsa}
\end{center}
\caption{The transducer generating the Grigorchuk group. 
Here $A=\{0,1\}$ and $S=\{a,b,c,d,e\}$.}\label{fig:grigorchuk}
\end{figure}

\subsection{Contracting groups}
Assume now that $G$ is a functionally recursive group, and that in the
action~\eqref{eq:action} the elements $g'$ are \emph{shorter} than
$g$, in the generating set $S$, in the sense that there are constants
$\lambda<1$ and $C$ with $|g'|\le\lambda|g|+C$ for all $g\in G$. Then,
up to replacing $S$ by the set of all words of length
$\le C/(1-\lambda)$, we also have $|g'|\le|g|$; we have thus defined a
subclass of automata groups, called \emph{contracting automata groups}
(see~\S\ref{ss:contract} for a more precise definition). Their word
problem is decidable in \textsc{LogSpace} (and therefore in
\textsc{PolyTime}).  We will see, however, that the order and orbit
order problems remain unsolvable in that restricted class:
\renewcommand\themainthm{\Alph{mainthm}}
\begin{mainthm}[= Theorem~\ref{thm:nuclear}]\label{thm:contracting}
  The transducers constructed in Theorems~\ref{thm:torsion}
  and~\ref{thm:torsion_orbit} may be assumed to generate contracting
  groups.
\end{mainthm}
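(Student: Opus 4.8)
The plan is to revisit the explicit finite-state transducers produced in the proofs of Theorems~\ref{thm:torsion} and~\ref{thm:torsion_orbit} and to verify---after a mild modification---that they satisfy the contraction criterion of \S\ref{ss:contract}. Recall that it suffices to exhibit a finite set $N\subseteq\langle\Phi\rangle$, closed under taking sections, such that every section of every generator eventually falls into $N$; equivalently, that the sections $g'$ of $g$ satisfy $|g'|\le\lambda|g|+C$ for some $\lambda<1$ and $C$. Concretely, one starts from the generators, repeatedly adjoins all their sections, and asks whether this closure process terminates in a finite set; the content of the theorem is that, for the transducers at hand, it does. Since the state set $S$ is already finite, the whole point is to control the growth of sections of arbitrary \emph{products} of generators, i.e.\ to bound how much computational ``activity'' a group element can spread across the levels of the tree.

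My first step would be to separate the states of the transducer into the computational states, which carry the simulation of the underlying machine whose halting (resp.\ orbit-finiteness) is undecidable, and the passive states, which act trivially or merely transport letters on deep subtrees. For the passive states the length bound is immediate. For the computational states the key observation to establish is that, along any sufficiently long path in the tree, the generator's section is forced either to become the identity or to return to a fixed finite list of ``engine'' configurations: one step of the simulation consumes at least one tree level, so after reading enough letters it is the machine state, rather than the accumulated history, that survives in the section. This is exactly what makes the candidate nucleus $N$ finite.

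If the raw construction happens to produce sections that grow---typically because a single generator both advances the machine and re-copies a growing portion of the configuration---the remedy is to slow the simulation down: encode each machine step over several levels (equivalently, enlarge the alphabet $A$ or pad the tape encoding) so that each application of $\Phi$ performs a bounded amount of rewriting and the activity per level stays bounded. One must then check that this re-encoding is faithful to the reduction, i.e.\ that the order of $s$ (resp.\ the cardinality of the orbit of $a^\infty$) is unchanged, so that Theorems~\ref{thm:torsion} and~\ref{thm:torsion_orbit} continue to hold verbatim for the modified transducer.

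The main obstacle I anticipate is reconciling the two competing requirements: the simulation must be rich enough to encode an undecidable problem, yet contraction forbids any generator from propagating unbounded activity down the tree. Concretely, the delicate point is to confirm that no product $s_1\cdots s_k$ of generators admits a section escaping the finite candidate nucleus $N$---this is a finite but intricate check on how the engine states compose, and it is where the careful choice of encoding in the previous step must pay off. Once $N$ is shown to be finite and section-closed, contraction follows, establishing the theorem.
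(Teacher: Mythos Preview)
Your outline names the right desiderata but is missing the actual construction, and one of your heuristics is mistaken. The transducers $\Phi_\mm$ built for Theorems~\ref{thm:torsion} and~\ref{thm:torsion_orbit} are \emph{not} contracting as they stand: for instance, in the dual Moore diagram of the transducer for Theorem~\ref{thm:torsion} there is a loop at $\state31$ with label $(x,x)$, so the section of $x^n$ at that letter is again $x^n$ for every $n$, and no finite nucleus can absorb these elements. The obstruction is not that the simulation is too fast, but that the counter generators $x,y$ loop with non-$\epsilon$ output; ``spreading each machine step over several levels'' does not touch this.

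The paper's fix is of a different nature. One \emph{composes} $\Phi_\mm$ with a finite family of auxiliary transducers $\Phi_0,\Phi_1,\dots,\Phi_\ell$ over extra bits $\{0,1\}^{\ell+3}$, each designed so that a given generator outputs $\epsilon$ on roughly half of the new letters (e.g.\ $\Phi_0$ alternates the output of $x$ between $x$ and $\epsilon$ according to one coordinate). By Lemma~\ref{lem:pathcontracting} this forces every sufficiently long reduced path in the dual Moore diagram of the composite $\Phi'=\Phi_\mm\circ\Phi_0\circ\cdots\circ\Phi_\ell$ to emit an $\epsilon$, whence $\langle\Phi'\rangle$ is contracting. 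The real content is then Proposition~\ref{prop:contracting}: one must verify that the composition preserves the reduction, namely that the symmetric-conjugacy-class graph used to read off orders (respectively, the orbit graph for $0^\infty$) is unchanged except for multiplying edge labels by $4$ and adding extra edges to the trivial class. Your proposal supplies neither the $\epsilon$-inserting composition nor this compatibility check, and the assertion that confirming the nucleus is ``a finite but intricate check'' is not accurate: without a construction that forces shrinking, there is nothing finite to check.
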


\subsection{Sketch of proofs}
We encode Minsky machines in functionally recursive groups. Minsky
machines (see~\cite{minsky:post}) are restricted Turing machines with
two tapes, which may move the tapes and sense the tape's end but may
not write on them; equivalently, they are finite state automata
equipped with two counters with values in $\N$ that may be
incremented, decremented and tested for $0$.

In all cases, we encode the machine, in state $s$ with counter values
$(m,n)$, by the word $s x^{2^m} y^{2^n}$ in a functionally recursive
group containing elements $x,y$ and an element $s$ for each state of
the machine. The action of the group is so devised that if the machine
evolves to state $(s',m',n')$ then the recursive action is given by
$s' x^{2^{m'}} y^{2^{n'}}$. The image of a prescribed ray under
$s x^{2^m} y^{2^n}$ records the computational steps of the machine
when started in $(s,m,n)$, and in particular whether the machine
reached a final state. We construct an auxiliary element $t$ that only
acts on sequences containing a trace of this final state, and then
$(s x^{2^m} y^{2^n})t(s x^{2^m} y^{2^n})^{-1}$ fixes the original ray
if and only if the machine never reaches the final state. Taking the
commutator of that last element acting only in the neighbourhood of
the original ray yields an expression that is trivial if and only if
the machine never reaches the final state.

Inherently, sometimes the output of the transducer is longer than the
input (e.g., if the machine increments the first counter, the
transducer must replace $x$ by $x^2$). To obtain an automata group, we
have the transducer consume a power of its input word
$s x^{2^m} y^{2^n}$; e.g., the incrementation of the counter may be
performed by erasing every second $s$ and every second block of
$y^{2^n}$'s. The element $s x^{2^m} y^{2^n}$ then may be arranged to
finite order if and only if the machine reaches the final state.

\subsection{Tilings}
Our results on functionally recursive groups and transducers may also be
interpreted in terms of tilings. Let $C$ be a finite set of
\emph{colours}, and $T\subseteq C^{N,E,S,W}$ be a set of \emph{Wang
  tiles}. A \emph{valid tiling} is a map $t\colon\Z^2\to T$ with
$t(x,y)^N=t(x,y+1)^S$ and $t(x,y)^E=t(x+1,y)^W$ for all
$x,y\in\Z^2$. Berger showed in~\cite{berger:undecidability} that it is
undecidable to determine, given $T$, whether there exists a valid
tiling by $T$. This has been improved: for
$\lambda,\mu\in\{N,E,S,W\}$, call a set of tiles
\emph{$\lambda\mu$-deterministic} if for every $c,d\in C$ there exists
at most one tile $u\in T$ with $u^\lambda=c$ and $u^\mu=d$, and
\emph{$\lambda\mu$-complete} if there exists precisely one tile
$u\in T$ with these conditions. Lukkarila showed
in~\cite{lukkarila:4wayundecidable} that the undecidability result
holds even under the restriction that $T$ is
$NE,NW,SE,SW$-deterministic. Clearly a $SW$-complete tileset tiles
uniquely the first quadrant for any choice of colours on the axes.

Our result on the order problem has the following translation into
tilings. We consider tilings of the upper half-plane
$\{(x,y)\mid y\ge0\}$. Then the following problem is undecidable even
for $SE,SW$-complete tilesets: \emph{``given $c\in C$, is there an
  integer $n\in\N$ such that every tiling of the upper half-plane with
  $c^\infty$ on the horizontal axis is horizontally $n$-periodic?''}.

Indeed, given $\Phi\colon A\times S\to S\times A$, set $C=A\sqcup S$
and whenever $\Phi(a,s)=(s',a')$ build a tile with $N,E,S,W$-labels
$s',a',s,a$ respectively; also build tiles with $N,E,S,W$-labels
$c,d,c,d$ for all $(c,d)\in C^2\setminus(S\times A)$. Then the above
tiling problem is satisfied for $c\in S$ if and only if $c$ has finite
order in $\langle\Phi\rangle$.

The word problem may also be translated to a tiling problem, but now
in hyperbolic space. The tileset is now
$T\subseteq C^{N,E,S_1,S_2,W}$. The lattice $\Z^2$ is now
$\Lambda\coloneqq\{2^y(i+x)\mid x,y\in\Z\}\subset\mathbb H$.  A tiling
is a map $t\colon\Lambda\to T$ with $t(2^y(i+x))^E=t(2^y(i+x+1))^W$
and $t(2^y(i+2x))^N=t(2^{y+1}(i+x))^{S_1}$ and
$t(2^y(i+2x+1))^N=t(2^{y+1}(i+x))^{S_2}$ for all $x,y\in\Z$. Tiles are
visualized as pentagons assembling into a tiling of the hyperbolic
plane, invariant under the transformations $z\mapsto z+1$ and
$z\mapsto 2z$:
\[\begin{tikzpicture}
    \foreach \x/\s in {0/1,1/1,2/1,3/1,4/1,
      0/0.5,1/0.5,2/0.5,3/0.5,4/0.5,5/0.5,6/0.5,7/0.5,8/0.5,9/0.5,
      2/0.25,3/0.25,4/0.25,5/0.25} do
    \draw[scale=\s] (\x,1) arc (116.5:63.5:1.12) -- +(0,-0.5) arc (63.5:116.5:0.56) arc (63.5:116.5:0.56) -- cycle;
    \draw[dotted] (0,0) -- (5,0);
  \end{tikzpicture}\]

The following problem is undecidable even for $NE,NW$-complete
tilesets: \emph{``given $c\in C$, does every tiling of
$\{x+i y\in\mathbb H\mid x\in[0,1],y\le1\}$with $c$ on the edge from
$i$ to $i+1$ have identical labels on the boundary half-lines
$\{x=0\}$ and $\{x=1\}$?''}.

Indeed by subdividing and inserting the empty state we may assume that
the map $\Phi$ describing our functionally recursive group satisfies
$\Phi(A\times S)\subseteq S^2\times A$; then tiles are defined as
above.

\subsection{History}
Links have been established since the beginning between undecidable
problems in theoretical computer science --- halting of Turing
machines --- and in algebra --- decidability of properties of
algebraic objects. Minsky machines, because of their simplicity, have
been early recognized as useful tools in this correspondence, see
e.g. Gurevich's work~\cite{gurevich:minsky} on identities in
semigroups.

Automata semigroups are defined quite similarly to automata groups;
one merely drops the requirement that the action be by invertible
maps. Decision problems have been extensively studied within the class
of automata
semigroups~\cites{akhavi-klimann-lombardy-mairesse-picantin:finiteness,klimann-mairesse-picantin:computations}. Gillibert
proved in~\cite{gillibert:finiteness} that the order problem is
unsolvable in that class. His proof is based on the undecidability of
Wang's tiling problem~\cite{berger:undecidability}, and harnesses
Kari's solution of the nilpotency problem for cellular
automata~\cite{kari:nilpotency}.

There are usually serious difficulties in converting a solution in
semigroups to one in groups. In particular, the tilings at the heart
of Gillibert's construction give fundamentally non-invertible
transformations of $A^*$.

On the other hand, a direct approach to the order problem succeeded
for the restricted class of ``bounded automata'' groups; Bondarenko,
Sidki and Zapata prove in~\cite{bondarenko-b-s-z:conjugacy} that they
have solvable order problem. Gillibert announced
in~\cite{gillibert:gporderpb} the undecidability of the order problem
in automata groups; his work uses a simulation of arbitrary Turing
machines by transducers via cellular automata.

\section{Functionally recursive groups and Minsky machines}\label{sec:wp}
All our theorems are proven by embedding Minsky machine computations
into functionally recursive groups. Let us recall more precisely the
definition of these machines:

\begin{defn}
  A \emph{Minsky machine} is a computational device $\mm$
  equipped with two integer counters $m,n$ and a finite amount of
  additional memory. It has a finite set $S$ of \emph{states}, an
  \emph{initial state} $s_*\in S$, a \emph{final state}
  $s_\dagger\in S$, and for each state $s\neq s_\dagger$ an
  instruction, which can by any of the following kind:
  \begin{enumerate}[I:]
  \item $(s,m,n)\mapsto(s',m+1,n)$;
  \item $(s,m,n)\mapsto(s',m,n+1)$;
  \item $(s,m,n)\mapsto(s',m+1,n+1)$;
  \item $(s,m,n)\mapsto(s',m-1,n)$, only valid if $m>0$;
  \item $(s,m,n)\mapsto(s',m,n-1)$, only valid if $n>0$;
  \item $(s,m,n)\mapsto(s',n,m)$;
  \item $(s,m,n)\mapsto(m=0\;?\;s':s'',m,n)$;
  \item $(s,m,n)\mapsto(n=0\;?\;s':s'',m,n)$;
  \item $(s,m,n)\mapsto m=0\;?\;(s',m,n):(s'',m-1,n)$;
  \item $(s,m,n)\mapsto n=0\;?\;(s',m,n):(s'',m,n-1)$.
  \end{enumerate}

  \noindent(We use the C style ``?:'' operator, with `$a\;?\;b:c$' meaning `if
  $a$ then $b$ else $c$'.)
  
  As $\mm$ is turned on, its state and counters initialize at
  $(s_0,m_0,n_0)=(s_*,0,0)$, and then $(s_{i+1},m_{i+1},n_{i+1})$ is
  determined from $(s_i,m_i,n_i)$ using the prescribed rules. If at
  some moment $s_i=s_\dagger$ then $\mm$ \emph{stops}; otherwise
  it runs forever.
\end{defn}

We recall the main result on Mealy machines, namely that they are as
powerful as Turing machines:
\begin{prop}[\cite{minsky:post}]\label{prop:minsky}
  (1) There is no algorithm that, given a Minksy machine $\mm$,
  determine whether it stops.

  (2) There is a ``universal'' Minsky machine $\mm$ such that
  \[\{n\in\N\mid \mm\text{ stops when turned on in state
    }(s_*,0,n)\}\]
  is not recursive.\qed
\end{prop}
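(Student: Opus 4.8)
The plan is to deduce both statements from the undecidability of the halting problem for Turing machines, by establishing that Minsky machines are Turing-complete; this is Minsky's theorem, whose standard proof I would reproduce. Assume then that the halting problem for Turing machines is undecidable. I would simulate an arbitrary Turing machine $\mm_0$ by a Minsky machine in three conceptual steps. First, replace $\mm_0$ by an equivalent two-stack machine: the portion of the tape to the left of the head forms one stack and the portion to the right forms the other, so that moving the head and reading or rewriting a symbol become push and pop operations on these two stacks, with the finite control holding the current head symbol. Second, encode each stack, whose content is a word over a $k$-letter alphabet, as a natural number written in base $k$; a push of the digit $d$ then becomes the operation $c\mapsto kc+d$, and a pop reads $c\bmod k$ and replaces $c$ by $\lfloor c/k\rfloor$. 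This yields a machine with a fixed number of integer counters on which one must perform multiplication and division by the constant $k$, addition and subtraction of a bounded digit, and tests of divisibility by $k$.

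Next I would realize each of these arithmetic primitives using only the elementary counter operations available to a Minsky machine, namely increment, decrement, test against zero, and their combinations (instructions I--X). Multiplication of a counter $C$ by the constant $k$ is performed by repeatedly decrementing $C$ while incrementing a scratch counter $k$ times, until $C$ reaches $0$, and then copying the scratch counter back; division with remainder is the symmetric loop, subtracting $k$ from $C$ and counting the quotient in a scratch counter until $C<k$, with divisibility read off from the final remainder. Finally I would compress the several counters into exactly two by a G\"odel-style encoding: a tuple $(c_1,\dots,c_r)$ is stored as the single number $p_1^{c_1}\cdots p_r^{c_r}$ for distinct primes $p_i$, kept in the first counter, while the second counter serves as scratch for the multiplications and divisions that implement ``increment $c_i$'' (multiply by $p_i$), ``decrement $c_i$'' (divide by $p_i$, legal exactly when $c_i>0$), and ``test $c_i=0$'' (test divisibility by $p_i$). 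I expect this last reduction, together with the verification that each subroutine terminates and leaves the scratch counter empty so that the pieces compose correctly, to be the main technical obstacle; it is laborious but entirely routine.

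With Turing-completeness in hand, statement~(1) is immediate: a decision procedure for ``does a given Minsky machine stop'', composed with the effective translation $\mm_0\mapsto\mm$ above, would decide the halting problem for Turing machines, a contradiction. For statement~(2) I would instead start from a fixed universal Turing machine $U$, whose halting set $\{w\mid U\text{ halts on input }w\}$ is non-recursive, and translate $U$ into a single Minsky machine $\mm$ by the same construction, prefixed by a short routine that unpacks the input $n$ (placed in the second counter) into the initial two-counter configuration encoding ``$U$ started on the word coded by $n$''. Fixing a computable coding of inputs by integers, $\mm$ stops when turned on in state $(s_*,0,n)$ if and only if $U$ halts on the word coded by $n$; hence $\{n\in\N\mid \mm\text{ stops from }(s_*,0,n)\}$ is a computable recoding of the halting set of $U$, and is therefore not recursive.
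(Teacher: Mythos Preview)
The paper does not prove this proposition: it is quoted as Minsky's classical result and closed with a bare \qed. Your outline is precisely the standard argument (Turing machine $\to$ two stacks $\to$ several counters via base-$k$ digits $\to$ two counters via prime-power G\"odel coding), and it is correct; there is nothing in the paper to compare it to.

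One point is worth making explicit for part~(2). Two-counter machines with the raw input convention famously cannot compute maps such as $n\mapsto 2^n$, so the ``short routine that unpacks the input $n$'' you invoke cannot be an arbitrary decoder implemented inside $\mm$. Your phrase ``fixing a computable coding of inputs by integers'' is exactly the right escape hatch: choose the coding so that $n$ already \emph{is} the G\"odel number $p_1^{c_1}\cdots p_r^{c_r}$ of the initial multi-counter configuration of $U$ on the intended input, so that the only ``unpacking'' inside $\mm$ is a counter swap (instruction~VI). The computable map $w\mapsto g(w)$ sending an input of $U$ to that G\"odel number is then a many-one reduction from the halting set of $U$ to $\{n\in\N\mid\mm\text{ halts from }(s_*,0,n)\}$, which is all that is needed to conclude non-recursiveness; you do not need the ``if and only if'' to hold for every $n$.
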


We also note that only one of the instructions \{I,II\} and III is
necessary, and that in the presence of VI only one of I,II, one of
IV,V, one of VII,VIII and one of IX,X is necessary. Minimal sets of
instructions are \{III,IV,V,VII,VIII\} and \{I,IV,VI,VII\} and
\{III,IX,X\} and \{I,VI,IX\}.

\subsection{Proof of Theorem~\ref{thm:wp_orbit}}
Let $\mm$ be a Minsky machine with stateset $S_0$. Without loss
of generality, we assume that all instructions of $\mm$ are of
type I, VI, IX.

We construct a functionally recursive group $\langle\Phi_\mm\rangle$
presented by $\Phi_\mm\colon A\times S\to F_S\times A$, for sets $A,S$
given as follows: the generating set $S$ consists of
\begin{itemize}
\item elements $x$, $y$, $s_\dagger$, $t$ and $u$;
\item for each state $s_i\in S_0$ of type I or IX, an element $s_i$; 
\item for each state $s_i\in S_0$ of type VI, three elements $s_i, a_i, b_i$.
\end{itemize}

\def\frstate#1#2{{\mathbf{#1}_{#2}}}

The alphabet $A$ consists of:
\begin{itemize}
\item four letters $0$, $1$, $\frstate\dagger1$ and $\frstate\dagger2$;
\item for each state $s_i\in S_0$ of type I, a letter $\frstate i1$; 
\item for each state $s_i\in S_0$ of type IX, two letters $\frstate i1$ and $\frstate i2$;
\item for each state $s_i\in S_0$ of type VI, five letters $\frstate i1,\frstate i2,\dots,\frstate i5$.
\end{itemize}

The map $\Phi_\mm\colon A \times S \to F_S \times A$ is given below,
with $\epsilon$ denoting the empty word in $F_S$. Whenever a value of
$\Phi_\mm$ is unspecified, we take it to mean $\Phi_\mm(a,s)=(s,a)$.

\begin{itemize}
\item for the states $s_\dagger$ and $t,u$ we put
  \begin{xalignat*}{3}
    \Phi_\mm(0, s_\dagger) &= (\epsilon, \frstate\dagger1); & \Phi_\mm(\frstate\dagger1, x) &= (\epsilon, \frstate\dagger1); & \Phi_\mm(\frstate\dagger1, y) &= (\epsilon, \frstate\dagger1);\\
    \Phi_\mm(\frstate\dagger1, s_\dagger) &= (\epsilon, 0); & \Phi_\mm(\frstate\dagger2, x) &= (\epsilon, \frstate\dagger2); & \Phi_\mm(\frstate\dagger2, y) &= (\epsilon, \frstate\dagger2);\\
    \Phi_\mm(\frstate\dagger1,t) &= (\epsilon, \frstate\dagger2); & \Phi_\mm(0,u)&=(u,1);
    & \Phi_\mm(\frstate\dagger2, s_\dagger) &= (\epsilon, \frstate\dagger2);\\
    \Phi_\mm(\frstate\dagger2,t) &= (\epsilon, \frstate\dagger1) & \Phi_\mm(1,u)&=(u,0).
  \end{xalignat*}
  
\item for all $g\in S\setminus\{u\}$ we put
  $\Phi_\mm(1,g)=(\epsilon,1)$, and for all $a\in A\setminus\{0,1\}$
  we put $\Phi_\mm(a,u)=(\epsilon,a)$.

\item for each instruction $(s_i,m,n)\mapsto(s_j,m+1,n)$ of type I we put
  \begin{xalignat*}{4}
    \Phi_\mm(0, s_i) &= (s_j, \frstate i1); &
    \Phi_\mm(\frstate i1, s_i) &= (\epsilon, 0); &
    \Phi_\mm(\frstate i1, x) &= (x^2, \frstate i1); &
    \Phi_\mm(\frstate i1, y) &= (y, \frstate i1).
  \end{xalignat*}

\item for each instruction $(s_i,m,n)\mapsto(s_j,n,m)$ of type VI,
  $\Phi_\mm(a,s)$ is written at position $(a,s)$ of the following table:
\[\begin{array}{cr|cccccc|}
 & & \multicolumn{6}{c|}{\text{input letter}}\\
 & & 0 & \frstate i1 & \frstate i2 & \frstate i3 & \frstate i4 & \frstate i5\\ \hline
\multirow{5}{*}{\rotatebox{90}{element of $S$}} 
& x & & (x^{b_i x}, \frstate i1) & (\epsilon, \frstate i3) & (x, \frstate i2) & (x^2, \frstate i4) & (y, \frstate i5)\\
& y & & (y^x, \frstate i1) & (y, \frstate i2) & & (y, \frstate i4)  & (x, \frstate i5)\\
& s_i & (a_i b_i x, \frstate i1) & (\epsilon, 0) & & & &\\
& a_i & (a_i, \frstate i2) & & (\epsilon, 0) & & &\\
& b_i & & & (b_i,\frstate i4) & (a_i^{-1}s_j,\frstate i5) & (\epsilon,\frstate i2) & (\epsilon,\frstate i3)\\ \hline
\end{array}
\]

\item for each instruction $(s_i,m,n)\mapsto(m=0\;?\;s_j:s_k,\max(0,m-1),n)$ of
  type IX we put
  \begin{xalignat*}{3}
    \Phi_\mm(0, s_i) &= (s_k, \frstate i1); & \Phi_\mm(\frstate i1,x) &= (s_k^{-1}s_j x,\frstate i2); & \Phi_\mm(\frstate i1,y) &= (y,\frstate i1);\\
    \Phi_\mm(\frstate i1, s_i) &= (\epsilon, 0); & \Phi_\mm(\frstate i2,x) &= (x^{-1} s_j^{-1}s_k x,\frstate i1); &
    \Phi_\mm(\frstate i2,y) &= (y,\frstate i2).
  \end{xalignat*}
\end{itemize}

Theorem~\ref{thm:wp_orbit} follows from the undecidability of the
halting problem for the Minsky machines (Proposition~\ref{prop:minsky})
and the following
\begin{prop}\label{prop:wp_orbit}
  Consider the infinite sequence $W=0^\infty$. Then the Minsky machine
  $\mm$ does not halt if and only if the action of
  $\langle\Phi_\mm\rangle$ satisfies
  \[W^{(s_* x y)t(s_* x y)^{-1}} = W.
  \]
\end{prop}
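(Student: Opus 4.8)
The plan is to reduce the displayed identity to a statement about the orbit of a single ray, and then to read the Minsky computation off the letters emitted along that ray. Since $\langle\Phi_\mm\rangle$ acts on $A^\infty$ by bijections, I would first set $P\coloneqq s_* x y$ and use the right action to write $W^{P t P^{-1}}=\bigl((W^P)^t\bigr)^{P^{-1}}$. As $(W^P)^{P^{-1}}=W$ and $(\cdot)^{P^{-1}}$ is injective, this equals $W$ precisely when $t$ fixes $W^P$. So the whole statement reduces to: \emph{$t$ fixes $W^{s_* x y}$ if and only if $\mm$ does not halt}.

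Next I would pin down the action of $t$. From $\Phi_\mm(\frstate\dagger1,t)=(\epsilon,\frstate\dagger2)$, $\Phi_\mm(\frstate\dagger2,t)=(\epsilon,\frstate\dagger1)$, and $\Phi_\mm(a,t)=(t,a)$ for every other letter $a$, the transducer $t$ scans a sequence unchanged until it meets the first letter in $\{\frstate\dagger1,\frstate\dagger2\}$, flips it, and thereafter (residual $\epsilon$) leaves the tail untouched. Hence $t$ fixes a sequence exactly when that sequence contains neither $\frstate\dagger1$ nor $\frstate\dagger2$. Because $\frstate\dagger2$ is produced only by $t$ itself (in every other rule it is either preserved or demands a $\frstate\dagger2$ on input), and $P$ does not involve $t$, the sequence $W^{s_* x y}$ never contains $\frstate\dagger2$; the criterion therefore simplifies to $\frstate\dagger1\notin W^{s_* x y}$.

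The heart of the argument is an induction showing that $W^P$ is the concatenated \emph{trace} of the run of $\mm$. Encoding a configuration $(s,m,n)$ by $\mathrm{enc}(s,m,n)\coloneqq s\,x^{2^m}y^{2^n}\in F_S$, so that $P=\mathrm{enc}(s_*,0,0)$, I claim that for $s\neq s_\dagger$, applying $\mathrm{enc}(s,m,n)$ to $0^\infty$ emits a nonempty block of letters none equal to $\frstate\dagger1$, and leaves a residual that acts on the remaining $0^\infty$ as $\mathrm{enc}(s',m',n')$, where $(s',m',n')$ is the successor configuration. I would verify this by composing the transducers along $\mathrm{enc}(s,m,n)$ one instruction type at a time: for type I, $s_i$ turns the leading $0$ into $\frstate i1$ with residual $s_j$, each of the $2^m$ copies of $x$ doubles (residual $x^2$) while echoing $\frstate i1$, and the $y$'s pass through, giving residual $s_j x^{2^{m+1}}y^{2^n}$; for type IX the $x$'s alternate between residuals $s_k^{-1}s_j x$ (reading $\frstate i1$) and $x^{-1}s_j^{-1}s_k x$ (reading $\frstate i2$), whose product over a consecutive pair telescopes to $x$, yielding residual $s_k x^{2^{m-1}}y^{2^n}$ when $m\ge1$ and $s_j x y^{2^n}$ when $m=0$, exactly the conditional decrement. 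Finally $\mathrm{enc}(s_\dagger,m,n)$ turns the leading $0$ into $\frstate\dagger1$ with trivial residual (every $x,y$ echoes $\frstate\dagger1$ with residual $\epsilon$), so a $\frstate\dagger1$ is emitted precisely when, and as soon as, the run reaches $s_\dagger$. Iterating the claim peels off the trace block by block, whence $W^P$ contains $\frstate\dagger1$ iff the run of $\mm$ ever reaches $s_\dagger$, i.e.\ iff $\mm$ halts; combined with the two reductions this proves the proposition.

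The step I expect to be the real obstacle is the type~VI (counter-swap) case of the induction. Unlike types I and IX it uses the auxiliary generators $a_i,b_i$, whose residuals are conjugates such as $x^{b_i x}$ and expressions like $a_i^{-1}s_j$, and one must follow how these carriers shuttle through several leading $0$'s in order to transform $s_i x^{2^m}y^{2^n}$ into $s_j x^{2^n}y^{2^m}$. Showing that the accumulated residual genuinely acts as $\mathrm{enc}(s_j,n,m)$ on the remaining tail — and that every letter emitted during this phase lies among $\frstate i1,\dots,\frstate i5$, hence is never $\frstate\dagger1$ — is where the bookkeeping is most delicate, and is the one place where a direct composition of the transducer rules, rather than a short telescoping identity, is required.
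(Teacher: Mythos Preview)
Your plan is correct, and the inductive engine — showing that feeding a block of $0$'s to $s_i x^{2^m} y^{2^n}$ emits letters among the $\frstate ij$ and leaves residual $s_j x^{2^{m'}} y^{2^{n'}}$ for the successor configuration — is exactly the computation the paper carries out case by case, including the multi-step type~VI case you correctly single out as the delicate one. The organizational difference is that you first use bijectivity of the action to reduce $W^{PtP^{-1}}=W$ to ``$t$ fixes $W^P$'', and then only follow the \emph{forward} action of $P$ on $0^\infty$, checking whether a $\frstate\dagger1$ ever appears. The paper instead keeps the whole conjugate $(s_i x^{2^m}y^{2^n})\,t\,(s_i x^{2^m}y^{2^n})^{-1}$ together and shows directly that it sends $0^\ell$ to $0^\ell$ with residual the conjugate attached to the next configuration; this costs one extra line per instruction type, namely the checks that $t$ commutes with the emitted $\frstate ij$ letters and that the $(s_i x^{2^m}y^{2^n})^{-1}$ half undoes the forward half. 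Your factoring is a mild but genuine streamlining; the paper's version has the compensating advantage that the output is literally a string of $0$'s, which is reused verbatim in the next proposition on $[PtP^{-1},u]$.
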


\begin{proof}
  We encode the states of $\mm$ by elements of $F_S$.  The word
  $(s_i x^{2^m}y^{2^n})\alpha (s_i x^{2^m}y^{2^n})^{-1}$ corresponds to
  the state $(s_i, m, n)$.

  It is convenient to write $\Phi_\mm(a, g) = (g', a')$ in the form
  $a \cdot g = g' \cdot a'$. In this manner, the computation of the
  functionally recursive action is given by a sequence of exchanges of
  letters with words in $F_S$. We check the following equalities:\medskip

  If $(s_i, m, n) \to (s_j, m+1, n)$ is an instruction of type I, then
  \begin{equation}\label{eq:wp_orbit:1}
    0 \cdot (s_i x^{2^m}y^{2^n})t (s_i x^{2^m}y^{2^n})^{-1} = 
    (s_j x^{2^{m+1}}y^{2^n})t (s_j x^{2^{m+1}}y^{2^n})^{-1} \cdot 0.
  \end{equation}
  Indeed
  $0 \cdot s_i x^{2^m}y^{2^n} = s_j \cdot \frstate i1 \cdot x^{2^m}y^{2^n} =
  s_j x^{2^{m+1}}y^{2^n} \cdot \frstate i1$; the claim follows from
  $\frstate i1 \cdot t = t \cdot \frstate i1$ and the reverse
  $ \frstate i1 \cdot (s_i x^{2^m}y^{2^n})^{-1} =
  (s_j x^{2^{m+1}}y^{2^n})^{-1} \cdot 0$.\medskip

  If $(s_i, m, n) \to (s_j, n, m)$ is an instruction of type VI, then
  \begin{equation}\label{eq:wp_orbit:3}
    0^{m+2} \cdot (s_i x^{2^m}y^{2^n})t (s_i x^{2^m}y^{2^n})^{-1} =
    (s_j x^{2^{n}}y^{2^m})t (s_j x^{2^{n}}y^{2^m})^{-1} \cdot 0^{m+2}.
  \end{equation}
  Indeed we first check
  $0 \cdot s_i x^{2^m}y^{2^n} = a_i b_i x(x^{b_i x})^{2^m}(y^x)^{2^n}\cdot\frstate i1=a_i x^{2^m}b_i y^{2^n}x\cdot\frstate i1$.

  We obtained a word with two ``blocks'' of $x$: the blocks $x^{2^m}$
  and $x^{2^0}$. Each time a `$0$' letter is multiplied on the left of
  that word, the size of the first block will halve and the size of
  the second one will double: for $m,n,p\in\N$, we have
  \[0\cdot a_i x^{2m} b_i y^n x^p=a_i x^m b_i y^n x^{2p}\cdot\frstate i4
  \]
  so $0^{m+1}\cdot s_i x^{2^m}y^{2^n} = a_i x b_i
  y^{2^n}x^{2^m}\cdot(\frstate i4)^m \frstate i1$. Then
  \[0\cdot a_i x b_i y^{2^n}x^{2^m}=a_i(a_i^{-1}s_j)x^{2^n}y^{2^m}\cdot\frstate i5,
  \]
  so
  $0^{m+2} \cdot s_i x^{2^m}y^{2^n} = s_j x^{2^n}y^{2^m}\cdot \frstate
  i5 (\frstate i4)^m\frstate i1$.  Recalling that we have
  $a \cdot t = t \cdot a$ for all $a=\frstate i1,\dots,\frstate i5$,
  the claim is proven.\medskip

  If $(s,m,n)\to (m=0\;? \; s_j \; : s_k, \max(m-1,0), n)$ is an instruction of
  type IX, then if $m = 0$ we have
  \begin{equation}\label{eq:wp_orbit:4}
    0\cdot (s_i x^{2^m}y^{2^n})t (s_i x^{2^m}y^{2^n})^{-1} = 
    (s_j x^{2^{m}}y^{2^n})t (s_j x^{2^{m}}y^{2^n})^{-1} \cdot 0
  \end{equation}
  while if $m > 0$ we have
  \begin{equation}\label{eq:wp_orbit:5}
    0\cdot (s_i x^{2^m}y^{2^n})t (s_i x^{2^m}y^{2^n})^{-1} = 
    (s_k x^{2^{m-1}}y^{2^n})t (s_k x^{2^{m-1}}y^{2^n})^{-1} \cdot 0.
  \end{equation}
  Indeed in the first case we have
  \[0\cdot s_i x y^{2^n}=s_k (s_k^{-1}s_j x)y^{2^n}\cdot\frstate i2,\]
  while in the second case we have
  \[0\cdot s_i x^{2^m}y^{2^n}=s_k(s_j^{-1}s_k x\cdot x^{-1}s_k^{-1}s_j
    x)^{2^{m-1}}y^{2^n}\cdot\frstate i1=s_k
    x^{2^{m-1}}y^{2^n}\cdot\frstate i1.\] Recalling that we have
  $a \cdot t = t \cdot a$ for all $a=\frstate i1,\frstate i2$, the
  claim is proven.\medskip

  From~\eqref{eq:wp_orbit:1}--\eqref{eq:wp_orbit:5} it follows
  that if $\mm$ does not halt then $W^{(s_* x y)t(s_* x y)^{-1}} =
  W$. Conversely, if $\mm$ halts then there exist $k,m,n\in\N$ such
  that
\[0^k \cdot (s_* x y)t(s_* x y)^{-1} = 
(s_\dagger x^{2^m}y^{2^n})t (s_\dagger x^{2^m}y^{2^n})^{-1} \cdot 0^k.
\]
Then
\[
  0 \cdot s_\dagger x^{2^m}y^{2^n}t (s_\dagger x^{2^m}y^{2^n})^{-1}=
  \frstate\dagger1 \cdot t (s_\dagger x^{2^m}y^{2^n})^{-1} =
  \frstate\dagger2 \cdot (s_\dagger x^{2^m}y^{2^n})^{-1} =
  \frstate\dagger2.
\]
In that case, we have
$W^{(s_* x y)t(s_* x y)^{-1}} = 0^{k}\frstate\dagger2 0^{\infty}\neq W$.
\end{proof}

The computations are best carried on $\Phi_\mm$'s \emph{dual Moore
  diagram} $\Delta$, see Figure~\ref{fig:wpmoore} : this is the
directed labeled graph with vertex set $A$ and with for all
$a\in A,s\in S$ an edge from $a$ to $b$ labeled $(s,t)$ whenever
$\Phi_\mm(a,s)=(t,b)$. One checks an equality `$\Phi_\mm(a,s)=(t,b)$'
by finding in $\Delta$ a path starting at $a$ with input label $s$;
the endpoint of the path is $b$, and the output label is $t$.

\begin{figure}[h]
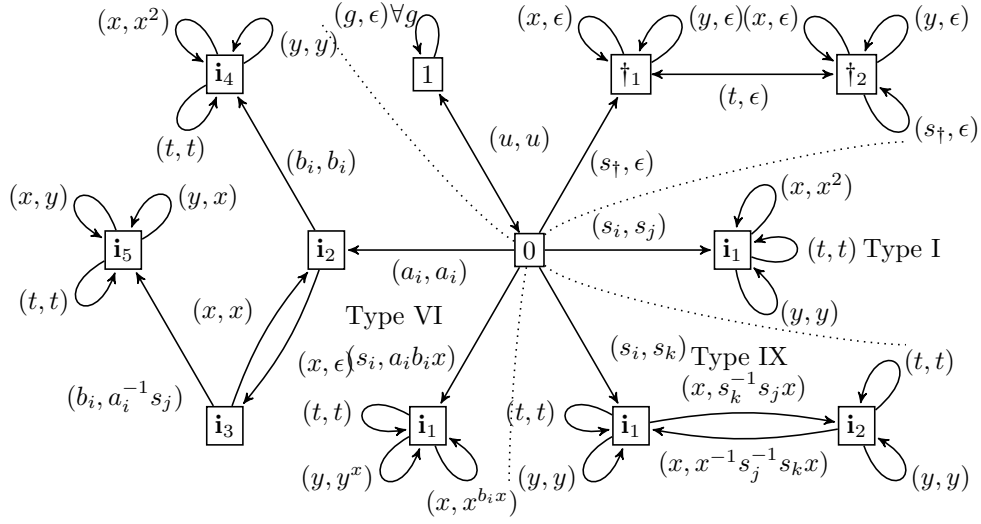

  \[\begin{dualmoore}[scale=0.9,node distance=3cm]
    \node[state] (0) at (0,0) {$0$};
    \node[state] (1) at (120:3) {$1$};
    \node[state] (d1) at (60:3) {$\frstate\dagger1$};    
    \node[state] (d2) [right of=d1] {$\frstate\dagger2$};    
    \node[state] (I1) at (0:3) {$\frstate i1$};
    \node[state] (IX1) at (-60:3) {$\frstate i1$};
    \node[state] (IX2) [right of=IX1] {$\frstate i2$};
    \node[state] (VI1) at (-120:3) {$\frstate i1$};
    \node[state] (VI2) at (180:3) {$\frstate i2$};
    \node[state] (VI3) at ($(VI2) +(240:3)$) {$\frstate i3$};
    \node[state] (VI4) at ($(VI2) +(120:3)$) {$\frstate i4$};
    \node[state] (VI5) at ($(VI3) +(120:3)$) {$\frstate i5$};

    \node at (5.5,0) {Type I};
    \node at (3.1,-1.6) {Type IX};
    \node at (-2,-1) {Type VI};

    \draw[-,dotted] (0) .. controls +(45:1) and +(180:1) .. (5.6,1.6);
    \draw[-,dotted] (0) .. controls +(150:1) and +(-60:1) .. (-3.1,3.4);
    \draw[-,dotted] (0) .. controls +(-45:1) and +(180:1) .. (5.6,-1.4);
    \draw[-,dotted] (0) .. controls +(-100:1) and +(90:1) .. (-0.3,-3.5);

    \path (0) edge[<->] node[above right] {$(u,u)$} (1)
              edge node[right] {$(s_\dagger,\epsilon)$} (d1)
              edge node {$(s_i,s_j)$} (I1)
              edge node [near end] {$(s_i,s_k)$} (IX1)
              edge node [pos=0.66,left] {$(s_i,a_i b_i x)$} (VI1)
              edge node {$(a_i,a_i)$} (VI2)
          (1) edge [in=110,out=70,loop] node[left] {$(g,\epsilon)\forall g$} ()
         (d1) edge [<->] node[below] {$(t,\epsilon)$} (d2)
              edge [in=150,out=110,loop] node[left] {$(x,\epsilon)$} ()
              edge [in=70,out=30,loop] node[right] {$(y,\epsilon)$} ()
         (d2) edge [in=150,out=110,loop] node[left] {$(x,\epsilon)$} ()
              edge [in=70,out=30,loop] node[right] {$(y,\epsilon)$} ()
              edge [in=-30,out=-70,loop] node[right] {$(s_\dagger,\epsilon)$} ()
         (I1) edge [in=80,out=40,loop] node[right] {$(x,x^2)$} ()
              edge [in=-40,out=-80,loop] node[right] {$(y,y)$} ()
              edge [in=20,out=-20,loop] node[right] {$(t,t)$} ()
        (IX1) edge [in=190,out=150,loop] node[left] {$(t,t)$} ()
              edge [in=250,out=210,loop] node[left] {$(y,y)$} ()
              edge [bend left=10] node {$(x,s_k^{-1}s_j x)$} (IX2)
        (IX2) edge [in=80,out=40,loop] node[right] {$(t,t)$} ()
              edge [bend left=10] node {$(x,x^{-1}s_j^{-1}s_k x)$} (IX1)
              edge [in=-30,out=-70,loop] node[right] {$(y,y)$} ()
        (VI1) edge [in=190,out=150,loop] node[left] {$(t,t)$} ()
              edge [in=-30,out=-70,loop] node[below] {$(x,x^{b_i x})$} ()
              edge [in=250,out=210,loop] node[left] {$(y,y^x)$} ()
        (VI2) edge [bend left=10] node {$(x,\epsilon)$} (VI3)
              edge node [right] {$(b_i,b_i)$} (VI4)
        (VI3) edge [bend left=10] node {$(x,x)$} (VI2)
              edge node [near start] {$(b_i,a_i^{-1}s_j)$} (VI5)
        (VI4) edge [in=150,out=110,loop] node[left] {$(x,x^2)$} ()
              edge [in=70,out=30,loop] node[below right] {$(y,y)$} ()
              edge [in=250,out=210,loop] node[below] {$(t,t)$} ()
        (VI5) edge [in=150,out=110,loop] node[left] {$(x,y)$} ()
              edge [in=70,out=30,loop] node[right] {$(y,x)$} ()
              edge [in=250,out=210,loop] node[left] {$(t,t)$} ();
    \end{dualmoore} 
  \]
  \caption{The dual Moore diagram of $\Phi_\mm$, used in the proof of
    Theorem~\ref{thm:wp}}
  \label{fig:wpmoore}
\end{figure}

\subsection{Proof of Theorem~\ref{thm:wp}}
We have not yet used the letter $1$ and the state $u$ of
$\Phi_\mm$. Theorem~\ref{thm:wp} follows now from the following
\begin{prop}\label{prop:wp}
  The Minsky machine $\mm$ halts if and only if
  \[[(s_* x y)t(s_* x y)^{-1},u]\neq 1 \text{ in } \langle\Phi_\mm\rangle.
  \]
\end{prop}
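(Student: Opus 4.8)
The plan is to reduce to Proposition~\ref{prop:wp_orbit}. Abbreviate $g\coloneqq(s_*xy)t(s_*xy)^{-1}$, so that by that proposition $g$ fixes the ray $W=0^\infty$ exactly when $\mm$ does not halt; I will show that $[g,u]$ is trivial exactly when $g$ fixes $W$. First I would read off $\Phi_\mm$ the two facts that drive everything. On the one hand, $u$ is an involution whose action on any sequence flips the maximal initial block of binary letters and is trivial from the first non-binary letter onward; in particular $W^u=1^\infty$. On the other hand, the rule $\Phi_\mm(1,h)=(\epsilon,1)$ imposed on every generator $h\neq u$ forces $g$ to fix every sequence beginning with the letter $1$, while \eqref{eq:wp_orbit:1}--\eqref{eq:wp_orbit:5} describe the action of $g$ on a sequence beginning with $0$: the leading $0$ is reproduced and $g$ continues as the element coding the next configuration of $\mm$.

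The forward implication is then immediate by evaluating on $W$. If $\mm$ halts, the computation at the end of the proof of Proposition~\ref{prop:wp_orbit} gives $W^g=0^k\frstate\dagger2 0^\infty$ for some $k\in\N$, so that $W^{gu}=(W^g)^u=1^k\frstate\dagger2 0^\infty$; whereas $W^{ug}=(W^u)^g=(1^\infty)^g=1^\infty$, since $g$ fixes every sequence beginning with $1$. As $1^k\frstate\dagger2 0^\infty\neq 1^\infty$, we conclude $gu\neq ug$, that is, $[g,u]\neq1$.

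For the converse I would argue by a self-similar induction mirroring the computation of $\mm$. Let $g_i$ denote the element coding the $i$-th configuration reached by $\mm$, so $g_0=g$ and each $g_i$ is again a conjugate of $t$ of the same shape; the aim is to show that if $\mm$ never halts then $[g,u]$ fixes every finite word and is therefore trivial by faithfulness. The key point is that the commutator should be compatible with the one-step recursion of $g$: reading a binary letter ought to transport $[g_i,u]$ to (a conjugate of) the commutator $[g_{i+1},u]$ attached to the next configuration, the outer factors contributed by $u$ being cancelled by the identity sections that $g$ and $u$ produce on the complementary binary letter, while the root permutation of every $[g_i,u]$ is trivial. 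Since $\mm$ never halts, this recursion never manufactures the letter $\frstate\dagger2$, so no $[g_i,u]$ acts nontrivially at its root, and an induction on word length would yield ${W'}^{[g,u]}=W'$ for all finite words $W'$. The main obstacle is precisely the verification of this recursion: one must compute the sections of $[g_i,u]$ at the letters $0$ and $1$ and check that, after cancelling the bit-flip factors introduced by $u$, they reduce to the next-configuration commutators — the delicate case being the halting step, where the appearance of $\frstate\dagger2$ breaks the recursion and reinstates nontriviality exactly as in the forward implication.
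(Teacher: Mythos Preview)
Your forward implication is correct and agrees with the paper's argument in spirit; the paper evaluates on the finite word $0^{k+1}$ and exhibits an explicit $2$--$2$--cycle of $[g,u]$, while you evaluate on the ray $0^\infty$ and compare $W^{gu}$ with $W^{ug}$, but the content is the same.

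The backward implication, however, has a genuine gap: the recursion you propose simply does not hold. Write $g_1$ for the section of $g$ at $0$ (the element coding the next configuration). Tracing $[g,u]=g^{-1}u^{-1}gu$ through the letter $0$ gives
\[
0\cdot g^{-1}=g_1^{-1}\cdot 0,\qquad 0\cdot u=u\cdot 1,\qquad 1\cdot g=\epsilon\cdot 1,\qquad 1\cdot u=u\cdot 0,
\]
so the section of $[g,u]$ at $0$ is $g_1^{-1}\,u\,\epsilon\,u=g_1^{-1}$, and similarly the section at $1$ is $u\,g_1\,u$. Neither of these is (a conjugate of) $[g_1,u]$: the crucial point is that when the commutator passes through the letter $1$ the factor $g$ collapses to $\epsilon$, so one copy of $g$ is lost and no commutator survives. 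Thus your inductive scheme fails at the very first step, and the sentence ``reading a binary letter ought to transport $[g_i,u]$ to (a conjugate of) the commutator $[g_{i+1},u]$'' is not justified.

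The paper's argument for this direction is entirely different and does not attempt to recurse on the commutator. It argues that if $\mm$ never halts then $g$ fixes \emph{every} sequence in $\{0,1\}^\infty$, not just $0^\infty$: on a block of $0$'s the element $g$ evolves through successive configuration-encodings, and at the first $1$ it becomes the identity by the blanket rule $\Phi_\mm(1,h)=(\epsilon,1)$. From this the paper deduces that the supports of $g$ and $u$ are disjoint, whence $[g,u]=1$. That global control of $g$ on all binary sequences is exactly the idea missing from your attempt.
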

\begin{proof}
  The element $u$ acts on $A^\omega$ as follows: it scans
  $X\in A^\omega$ for its longest prefix in $\{0,1\}^*$, and exchanges
  all $0$ and $1$ in that prefix. Write $g=(s_* x y)t(s_* x y)^{-1}$;
  from Proposition~\ref{prop:wp_orbit} we know that $g$ fixes
  $0^\infty$ if and only if $\mm$ does not halt.

  Assume first that $\mm$ does not halt; then $g$ in fact also fixes
  $\{0,1\}^\infty$, so the supports of $g$ and $u$ are disjoint and
  $[g,u]=1$ in $\langle\Phi_\mm\rangle$.

  Assume next that $\mm$ does halt; without loss of generality, we may
  assume $\mm$ does not stop immediately, so there is $k\ge1$ such
  that $(0^{k+1})^g=0^k\frstate\dagger2$. Since $(0^{k+1})^u=1^{k+1}$
  and $(0^k\frstate\dagger2)^u=1^k\frstate\dagger2$ and
  $(1^{k+1})^g=1^{k+1}$ and
  $(1^k\frstate\dagger2)^g=1^k\frstate\dagger2$, the commutator
  $[g,u]$ acts as a $2$-$2$-cycle
  $(0^{k+1},0^k\frstate\dagger2)(1^{k+1},1^k\frstate\dagger2)$ and in
  particular $[g,u]\neq1$ in $\langle\Phi_\mm\rangle$:
  \[\begin{dualmoore}[node distance=2cm,baseline=-3mm]
      \node[state] (11) at (0,0) {$1^{k+1}$};
      \node[state] (00) [right of=11] {$0^{k+1}$};
      \node[state] (0d) [right of=00] {$0^k\frstate\dagger2$};
      \node[state] (1d) [right of=0d] {$1^k\frstate\dagger2$};
      \path (11) edge [in=195,out=165,loop] node [left] {$g$} ()
                 edge[<->] node {$u$} (00)
            (00) edge[<->] node {$g$} (0d)
            (1d) edge[<->] node [above] {$u$} (0d)
                 edge [in=15,out=-15,loop] node [right] {$g$} ();
  \end{dualmoore}\qedhere\]
\end{proof}

\subsection{Proof of Theorem~\ref{thm:wp_uniform}}
Consider a universal Minsky machine $\mm_u$, namely one that emulates
arbitrary Turing machines encoded in an integer $n$, when started in
state $(s_*,0,n)$. The set of $2^n$ such that $\mm_u$ halts when
started in state $(s_*, 0, n)$ is not recursive, see
Proposition~\ref{prop:minsky}. Therefore, Theorem~\ref{thm:wp_uniform}
follows by considering in the group $\langle\Phi_{\mm_u} \rangle$ the
elements $[(s_* x y^{2^n})t(s_* x y^{2^n})^{-1},u]$; this set of words
is recursive, but the subset of those that equal $1$ in
$\langle\Phi_{\mm_u} \rangle$ is not recursive.

\section{Automata groups and Minsky machines}

\subsection{Proof of Theorem~\ref{thm:torsion}}

\def\state#1#2{\text{\uppercase\expandafter{\romannumeral #1}}_{#2}}

Let $\mm$ by a Minsky machine with stateset $S_0$. Without loss of
generality, we assume that all instructions of $\mm$ are of type
III,IV,V,VII,VIII, as defined in the beginning of Section
\ref{sec:wp},
\[S_0 = S_{\state3{}} \sqcup  S_{\state4{}} \sqcup  S_{\state5{}} \sqcup  S_{\state7{}} 
\sqcup  S_{\state8{}} \sqcup \{s_\dagger\}
\]

We consider the transducer with stateset
$S\coloneqq S_0^{\pm1}\sqcup\{\epsilon,x,x^{-1},y,y^{-1}\}$ and
alphabet
\[A=\{\state3i,\state4i,\state5i,\state7j,\state8j\mid i=1,2,\overline1,\overline2;\;j=1,\dots,4,\overline1,\dots,\overline4\}.
\]
The structure of the transducer is given by its map
$\Phi_\mm\colon A\times S\to S\times A$, first described as a table, with
$\Phi_\mm(a,s)$ at position $(a,s)$. The state $\epsilon$ is the identity,
and $\Phi_\mm(a,\epsilon)=(\epsilon,a)$ for all $a\in A$.

\noindent For all instructions $(s,m,n)\mapsto(s',m+1,n+1)$ of type III and for all $t\in S_0\setminus S_{\state3{}}$ we have 
\[\begin{array}{cr|cccc|}
 & & \multicolumn{4}{c|}{\text{input letter}}\\
 & & \state31 & \state32 & \state3{\overline1} & \state3{\overline2}\\ \hline
\multirow{4}{*}{\rotatebox{90}{in state}} & x & (x,\state31) & (x,\state32) & (x^{-1},\state3{\overline1}) & (x^{-1},\state3{\overline2})\\
 & y & (y,\state31) & (y,\state32) & (y^{-1},\state3{\overline1}) & (y^{-1},\state3{\overline1})\\
 & s & (s',\state32) & (\epsilon,\state31) & (\epsilon,\state3{\overline2}) & ((s')^{-1},\state3{\overline1})\\
 & t & (\epsilon,\state3{\overline1}) & (\epsilon,\state3{\overline2}) & (\epsilon,\state31) & (\epsilon,\state32)\\ \hline
\end{array}
\]

\noindent For all instructions $(s,m,n)\mapsto(s',m-1,n)$ of type IV
and for all $t\in S_0\setminus S_{\state4{}}$, we have
\[\begin{array}{cr|cccc|}
 & & \multicolumn{4}{c|}{\text{input letter}}\\
 & & \state41 & \state42 & \state4{\overline1} & \state4{\overline2}\\ \hline
\multirow{4}{*}{\rotatebox{90}{in state}} & x & (x,\state42) & (\epsilon,\state41) & (\epsilon,\state4{\overline2}) & (x^{-1},\state4{\overline1})\\
 & y & (y,\state41) & (y,\state42) & (y^{-1},\state4{\overline1}) & (y^{-1},\state4{\overline2})\\
 & s & (s',\state41) & (s',\state42) & ((s')^{-1},\state4{\overline1}) & ((s')^{-1},\state4{\overline2})\\
 & t & (\epsilon,\state4{\overline1}) & (\epsilon,\state4{\overline2}) & (\epsilon,\state41) & (\epsilon,\state42)\\ \hline
\end{array}
\]
The same applies for an instruction of type V, with the roles of $x,y$
switched.
  
For an instruction $(s,m,n)\mapsto(m=0\;?\;s':s'',m,n)$ of type
VII and for all $t\in S_0\setminus S_{\state7{}}$, we have 
\[\kern-20mm\begin{array}{cr|cccccccc|}
 & & \multicolumn{8}{c|}{\text{input letter}}\\
 & & \state71 & \state72 & \state73 & \state74 & \state7{\overline1} & \state7{\overline2} & \state7{\overline3} & \state7{\overline4}\\ \hline
\multirow{4}{*}{\rotatebox{90}{in state}} & x & (x,\state74) & (\epsilon,\state73) & (\epsilon,\state72) & (x,\state71) & (x^{-1},\state7{\overline4}) & (\epsilon, \state7{\overline3}) & (\epsilon,\state7{\overline2}) & (x^{-1},\state7{\overline1})\\
 & y & (y,\state71) & (\epsilon,\state72) & (\epsilon,\state73) & (y,\state74) & (y^{-1},\state7{\overline1}) & (\epsilon,\state7{\overline2}) & (\epsilon,\state7{\overline3}) & (y^{-1},\state7{\overline4})\\
 & s & (\epsilon,\state72) & (s'',\state71) & (s',\state74) & (\epsilon,\state7{\overline4}) & ((s'')^{-1},\state7{\overline2}) & (\epsilon,\state7{\overline1}) & (\epsilon,\state7{3}) & ((s')^{-1},\state7{\overline3})\\
 & t & (\epsilon,\state7{\overline 1}) & (\epsilon,\state7{\overline2}) & (\epsilon,\state7{\overline3}) & (\epsilon,\state7{\overline 4}) & (\epsilon,\state7{1}) & (\epsilon,\state7{2}) & (\epsilon,\state7{3}) & (\epsilon,\state7{4})\\ \hline
\end{array}
\]
The same applies for an instruction of type VIII, with the roles of
$x,y$ switched.

\noindent Note that $s_\dagger$ is treated as a state $t$ in all
tables above.

Theorem~\ref{thm:torsion} follows from the undecidability of the
halting problem for Minsky machines, and the following
\begin{prop}
  The Minsky machine $\mm$ constructed above halts if and only
  if the element $s_* x y$ has finite order in
  $\langle\Phi_\mm\rangle$.
\end{prop}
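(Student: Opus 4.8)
The plan is to control the order of $g=s_* x y$ via the standard recursive criterion for self-similar groups: if the level-one permutation of an element $h$ decomposes into cycles $C$, and for $C=(a_1,\dots,a_\ell)$ we write $h|_C=h|_{a_1}h|_{a_2}\cdots h|_{a_\ell}$ for the product of sections read around $C$, then
\[\operatorname{ord}(h)=\operatorname{lcm}_C\bigl(|C|\cdot\operatorname{ord}(h|_C)\bigr),\]
so that $h$ has finite order if and only if every $h|_C$ does. All sections are read off the tables through $a\cdot h=h|_a\cdot a^h$. Before anything else I would record the relation $[x,y]=1$ in $\langle\Phi_\mm\rangle$, verified directly on the finite transducer; it is what turns the doubling performed by the incrementing block into the clean successor encoding.

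First I would compute, block by block, the level-one data of $g=s_i x^{2^m}y^{2^n}$ for each instruction type. For a decrement (type IV) the letter $\state41$ is a fixed point and $g|_{\state41}=s' x^{2^{m-1}}y^{2^n}$ is exactly the encoding of the successor $(s',m-1,n)$; the halving is produced by $\Phi_\mm(\state41,x)=(x,\state42)$ and $\Phi_\mm(\state42,x)=(\epsilon,\state41)$, which emit one $x$ for every two read. Type V is identical with $x,y$ exchanged. For the increment (type III) the pair $(\state31,\state32)$ is a $2$-cycle and, using $[x,y]=1$,
\[g|_{\state31}\,g|_{\state32}=\bigl(s' x^{2^m}y^{2^n}\bigr)\bigl(x^{2^m}y^{2^n}\bigr)=s' x^{2^{m+1}}y^{2^{n+1}},\]
the encoding of $(s',m+1,n+1)$. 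For the tests (types VII, VIII) the letters $\state71,\dots,\state74$ let the transducer read off the parity of the number of $x$'s, which is odd exactly when $m=0$, and route the distinguished cycle-section to the encoding of $s'$ or of $s''$ accordingly. In every case the barred letters carry $g^{-1}$: I would check that the barred cycle paired to a distinguished cycle $C$ has section conjugate to $g|_C^{-1}$ (conjugating by $x^{2^m}y^{2^n}$), hence of equal order, and that all remaining cycles---including those in the blocks of states other than $s_i$, on which $s_i$ acts through the ``$t$'' rows---fall into inverse-conjugate pairs and so contribute only finite order.

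Next I would settle the base of the recursion. Since $s_\dagger$ acts as a ``$t$'' in every table, $g=s_\dagger x^{2^m}y^{2^n}$ interchanges each unbarred/barred pair with cycle-section $x^{-2^m}y^{-2^n}x^{2^m}y^{2^n}=1$, so $s_\dagger x^{2^m}y^{2^n}$ has order $2$. Assembling the successor identities, the distinguished cycle-section of the encoding of a configuration is the encoding of the next configuration, so $\operatorname{ord}(s_* x y)$ is driven by the run of $\mm$ started at $(s_*,0,0)$. If $\mm$ halts, only finitely many configurations occur before $s_\dagger$ is reached, every cycle-section along the way has finite order, and hence $s_* x y$ has finite order. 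If $\mm$ runs forever, I would use that type III is the only instruction incrementing a counter: a non-halting run either has an unbounded counter or is eventually periodic with increments balancing decrements, so type III fires infinitely often. Each firing is a $2$-cycle contributing a factor $|C|=2$; following the distinguished chain through its first $j$ type III firings forces $2^{\,j}\mid\operatorname{ord}(s_* x y)$, so the order is infinite.

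The hard part is the block-by-block bookkeeping behind the second step: verifying that for every instruction type and in every block the non-distinguished cycle-sections really do have finite order, in particular the inverse-conjugate pairing of barred with unbarred letters and the behaviour of $g$ on the blocks of unrelated states through the ``$t$'' rows. A second, more combinatorial than computational, delicate point is the claim that type III necessarily fires infinitely often in any non-halting run, which is exactly what upgrades ``runs forever'' to $2^{\infty}\mid\operatorname{ord}(s_* x y)$.
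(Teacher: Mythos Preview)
Your approach is essentially the paper's: your recursive formula $\operatorname{ord}(h)=\operatorname{lcm}_C\,|C|\cdot\operatorname{ord}(h|_C)$ is the lemma the paper proves, packaged there as an integer-labelled graph on \emph{symmetric} conjugacy classes $C(g)=\{g^{\pm h}:h\in G\}$, which absorbs in one stroke the barred/unbarred inverse-conjugate pairing you handle by hand. The block-by-block identification of the distinguished cycle-section with the encoding of the successor configuration, and the triviality of the ``$t$''-row sections, are carried out in the paper just as you sketch.

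The gap is in the non-halting direction, precisely at the point you flag as delicate. The claim that type III necessarily fires infinitely often in any non-halting run is false: a single state $s$ of type VII with both targets $s'=s''=s$ loops forever from $(s,0,0)$ without ever incrementing. Your dichotomy ``unbounded counter, or eventually periodic with increments balancing decrements'' does cover this run, but with zero increments and zero decrements in the period, so nothing forces a type III. The paper's argument is instead that types VII and VIII \emph{also} give distinguished cycles of length $\ge 2$ on their letter blocks --- only types IV and V produce fixed points (label $1$). Since the counters are bounded below by $0$, types IV and V cannot fire infinitely many times in succession; hence along any non-halting run some instruction of type III, VII, or VIII fires infinitely often, each contributing a factor $\ge 2$ along the distinguished path and forcing the order to be infinite. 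You already observed that the test instructions ``route the distinguished cycle-section'' through a nontrivial cycle; you simply need to use that observation here rather than relying on type III alone.
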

\begin{proof}
  Set $G=\langle\Phi_\mm\rangle$. For $g\in G$, denote by $C(g)$ its
  symmetric conjugacy class:
  \[C(g)\coloneqq\{g^{\pm x}\mid x\in G\}.
  \]

  Given a symmetric conjugacy class $C$, choose a representative $g$
  in it, let $A=A_1\sqcup\dots\sqcup A_\ell$ be the decomposition of
  $A$ into cycles for the action of $g$, and choose representatives
  $a_i\in A_i$. We have
  $\overline\Phi_\mm(a_i,g^{\#A_i})=(h_i,a_i)$ for some
  $h_i\in G$, and it is easy to see that the collection of symmetric
  conjugacy class $\{C(h_i)\mid i=1,\dots,\ell\}$ is independent of
  the choice of $g$ and the $a_i$.

  We construct an integer-labeled, directed graph\footnote{This graph
    essentially appears in the solution
    of~\cite{bondarenko-b-s-z:conjugacy} to the order problem in
    bounded automata.} whose vertices are symmetric conjugacy classes
  in $G$; for a conjugacy class $C$ as above, there are $\ell$ edges
  starting at $C$, ending respectively at $C(h_1),\dots,C(h_\ell)$
  with labels $\#A_1,\dots,\#A_\ell$.

  \begin{lem}
    For $g\in G$, its order (in $\N\cup\{\infty\}$) is the least
    common multiple, along all paths starting at $C(g)$, of the
    product of the labels along the path.
  \end{lem}
  \begin{proof}
    Consider a path starting at $C(g)$, with labels $n_1,\dots,n_s$,
    and going through vertices $C(g_1),\dots,C(g_s)$. Then $g$ has an
    orbit of length $n_1$ on $A$, so the order of $g$ is a multiple of
    $n_1$. Furthermore, $g^{n_1}$ fixes that orbit, and acts as $g_1$
    on sequences that start by that orbit. Recursively, the order of
    $g_1$ is a multiple of $n_2\cdots n_s$, so the order of $g$ is a
    multiple of $n_1\cdots n_s$. In particular, if there are paths
    with arbitrarily large product of labels then $g$ has infinite
    order.

    Conversely, if $g$ has infinite order then there are arbitrarily
    long orbits of $g$ on $A^*$, so there are paths with arbitrarily
    large product of labels; and if $m$ be the least common multiple
    of all path labels then all edges on paths starting at $C(g^m)$
    are labeled $1$ so $g^m$ fixes every sequence and therefore
    $g^m=1$.
  \end{proof}
  
  Let us compute the subgraph spanned by $C(s_* x y)$. For the
  computations, it is helpful to picture the operation of the
  transducer $\Phi_\mm$ by means of its dual Moore diagram $\Delta$,
  see Figure~\ref{fig:torsionmoore}. Given $g\in G$, we compute all
  primitive cycles in $\Delta$ whose input label is a power of $g$,
  and read the corresponding output label; these are the $h_i$ in the
  map on symmetric conjugacy classes $C(g)\rightsquigarrow\{C(h_i)\}$.

  \begin{figure}[h]
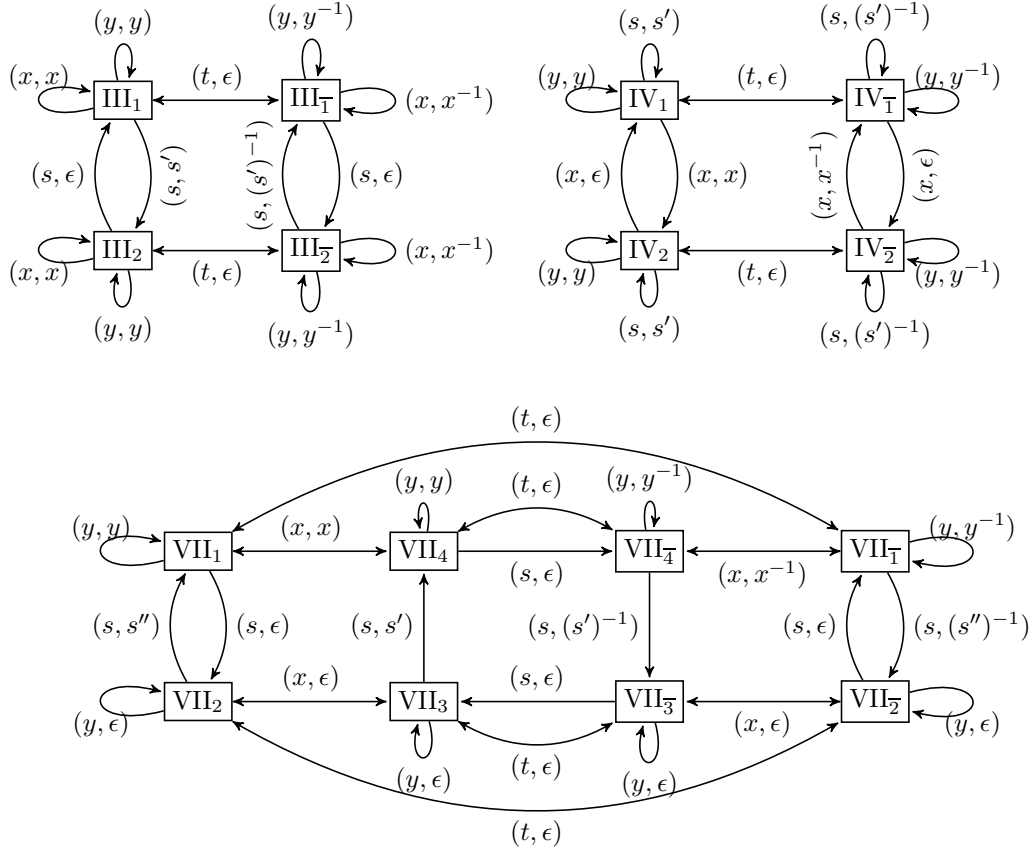

    \[\begin{dualmoore}
      \node[state] (3_1) at (-2,0) {$\state31$};
      \node[state] (3_2) [below of=3_1] {$\state32$};
      \node[state] (3_1x) [node distance=2.5cm, right of=3_1] {$\state3{\overline1}$};
      \node[state] (3_2x) [node distance=2.5cm, right of=3_2] {$\state3{\overline2}$};

      \node[state] (4_1) at (5,0) {$\state41$};
      \node[state] (4_2) [below of=4_1] {$\state42$};
      \node[state] (4_1x) [node distance=3cm,right of=4_1] {$\state4{\overline1}$};
      \node[state] (4_2x) [below of=4_1x] {$\state4{\overline2}$};

      \node[state] (7_1) at (-1,-6) {$\state71$};
      \node[state] (7_2) [below of=7_1] {$\state72$};
      \node[state] (7_3) [node distance=3cm,right of=7_2] {$\state73$};
      \node[state] (7_4) [above of=7_3] {$\state74$};
      \node[state] (7_3x) [node distance=3cm,right of=7_3] {$\state7{\overline3}$};
      \node[state] (7_4x) [above of=7_3x] {$\state7{\overline4}$};
      \node[state] (7_1x) [node distance=3cm,right of=7_4x] {$\state7{\overline1}$};
      \node[state] (7_2x) [below of=7_1x] {$\state7{\overline2}$};

      \path (3_1) edge [bend left] node {\rotatebox{90}{$(s,s')$}} (3_2)
                  edge [loop left] node [above] {$(x,x)$} (3_1)
                  edge [loop above] node [above] {$(y,y)$} (3_1)
                  edge [<->] node {$(t,\epsilon)$} (3_1x)
            (3_2) edge [bend left] node {$(s,\epsilon)$} (3_1)
                  edge [loop left] node [below] {$(x,x)$} (3_2)
                  edge [loop below] node [below] {$(y,y)$} (3_2)
                  edge [<->] node [below] {$(t,\epsilon)$} (3_2x)
		(3_1x) edge [bend left] node {$(s,\epsilon)$} (3_2x)
			edge [loop above] node {$(y, y^{-1})$} ()
			edge [loop right] node {$(x, x^{-1})$} ()
		(3_2x) edge [bend left] node {\rotatebox{90}{$(s, (s')^{-1})$}} (3_1x)
			edge [loop below]  node {$(y, y^{-1})$} () 
			edge [loop right]  node {$(x, x^{-1})$} ();

      \path (4_1) edge [loop above] node {$(s,s')$} ()
                  edge [bend left] node {$(x,x)$} (4_2)
                  edge [loop left] node [above] {$(y,y)$} ()
                  edge [<->] node {$(t,\epsilon)$} (4_1x)
            (4_2) edge [loop below] node {$(s,s')$} ()
                  edge [bend left] node {$(x,\epsilon)$} (4_1)
                  edge [loop left] node [below] {$(y,y)$} ()
                  edge [<->] node [below] {$(t,\epsilon)$} (4_2x)
            (4_1x) edge [loop above] node {$(s,(s')^{-1})$} ()
                  edge [bend left] node {\rotatebox{90}{$(x,\epsilon)$}} (4_2x)
                  edge [loop right] node [above] {$(y,y^{-1})$} ()
            (4_2x) edge [loop below] node {$(s,(s')^{-1})$} ()
                  edge [bend left] node {\rotatebox{90}{$(x,x^{-1})$}} (4_1x)
                  edge [loop right] node [below] {$(y,y^{-1})$} ();

      \path (7_1) edge [bend left] node {$(s,\epsilon)$} (7_2)
                  edge [<->] node {$(x,x)$} (7_4)
                  edge [loop left] node [above] {$(y,y)$} ()
                  edge [<->, bend left] node {$(t,\epsilon)$} (7_1x)
            (7_2) edge [bend left] node {$(s,s'')$} (7_1)
                  edge [<->] node {$(x,\epsilon)$} (7_3)
                  edge [loop left] node [below] {$(y,\epsilon)$} ()
                  edge [<->, bend right] node [below] {$(t,\epsilon)$} (7_2x)
            (7_3) edge [] node {$(s,s')$} (7_4)
                  edge [<->, bend right] node [below] {$(t,\epsilon)$} (7_3x)
                  edge [loop below] node [below] {$(y,\epsilon)$} ()
            (7_4) edge [] node [below] {$(s,\epsilon)$} (7_4x)
                  edge [<->, bend left] node {$(t,\epsilon)$} (7_4x)
                  edge [in=100,out=80,loop] node [above] {$(y,y)$} ()
            (7_1x) edge [bend left] node {$(s,(s'')^{-1})$} (7_2x)
                  edge [<->] node {$(x,x^{-1})$} (7_4x)
                  edge [loop right] node [above] {$(y,y^{-1})$} ()
            (7_2x) edge [bend left] node {$(s,\epsilon)$} (7_1x)
                  edge [<->] node {$(x,\epsilon)$} (7_3x)
                  edge [loop right] node [below] {$(y,\epsilon)$} ()
            (7_3x) edge [] node [above] {$(s,\epsilon)$} (7_3)
                  edge [loop below] node [below] {$(y,\epsilon)$} ()
            (7_4x) edge [] node [left] {$(s,(s')^{-1})$} (7_3x)
                  edge [in=80,out=100,loop] node [above] {$(y,y^{-1})$} ();
    \end{dualmoore}
    \]
    \caption{The dual Moore diagram of $\Phi_\mm$, used in the proof of
      Theorem~\ref{thm:torsion}}
    \label{fig:torsionmoore}
  \end{figure}

  We first note, by direct inspection, that $x$ and $y$ commute in
  $G$. This follows by tracing the path $x^{-1}y^{-1}x y$ in the
  graphs above, and noting that they always induce the trivial
  permutation of $A$ with output either trivial or conjugate to
  $(x^{-1}y^{-1}x y)^{\pm1}$.

  We now claim that, if $(s,m,n)\to(s',m',n')$ is a transition of the
  machine $\mm$, then the conjugacy class $C(s x^{2^m} y^{2^n})$ has
  at least one arrow to $C(s' x^{2^{m'}} y^{2^{n'}})$, and possibly
  other arrows, all of them to $C(1)$.  We also claim that if $s$ is
  not of type IV or V, then arrows to $C(s' x^{2^{m'}} y^{2^{n'}})$
  are with labels $>1$; and all arrows from
  $C(s_\dagger x^{2^m}y^{2^n})$ are arrows to $C(1)$.  We see that if
  the machine halts then every path starting at $C(s_* x y)$ has only a
  finite number of labels $>1$, and this shows that the order of
  $s_* x y$ is finite.

  On the other hand, if the machine does not halt then there is an
  path with infinitely many labels $>1$ (because no Minsky machine can
  decrease its counters infinitely many times in a row) so $s_* x y$
  has infinite order.
 
  Note that our transducer has the property
  $\Phi_{\mm}(L_{\overline i}, g) = \Phi_{\mm}(L_i, g^{-1})$, for all
  $g \in S$ and all
  $L\in \{\state3{}, \state4{}, \state5{}, \state7{}, \state8{} \}$.
  Also note that
  $\Phi_{\mm}(L_i, t) = \Phi_{\mm}(L_i, t^{-1}) = (\epsilon,
  L_{\overline i})$ whenever $t$ is any instruction not of type $L$.

  Using this, we can prove that if $t$ is not of type $L$, then
  $t g_n g_{n-1}\cdots g_1 t g_1 g_2 \cdots g_n$ fixes the orbit
  $\{L_i\}$ with output $\epsilon$.

  Indeed,
  $t g_n g_{n-1}\cdots g_1 t g_1 g_2 \cdots g_n = (g_n t g_n t^{-1} \cdot
  t g_{n-1}g_{n-2}\cdots g_1 t g_1 g_2 \cdots g_{n-1})^{g_n}$, and we
  use induction on $n$.  It follows that $(t x^my^n)^2$ fixes ${L_i}$
  with outputs $\epsilon$, i.e., there is an arrow from $C(t x^my^n)$ to
  $C(1)$ with label 2.
 
  Let us first restrict to the orbit $\{\state3i\}$ of $G$ on $A$.  We
  consider $g\coloneqq s x^m y^n$ with $s$ an instruction of type
  III. It acts as a product of two cycles
  $(\state31,\state32)(\state3{\overline 1}, \state3{\overline2})$;
  the output label of $g^2$ on the first cycle, starting at
  $\state31$, is $s' x^m y^n\epsilon x^m y^n=s' x^{2m} y^{2n}$, and
  the output of $g^2$ starting on the second cycle at
  $\state3{\overline 1}$ is
  $\epsilon x^{-m} y^{-n}(s')^{-1}x^{-m}y^{-n}\in C(s' x^{2m}
  y^{2n})$.  There are therefore two arrows from
  $C(s x^{2^{m}} y^{2^n})$ to $C(s' x^{2^{m+1}} y^{2^{n+1}})$, as
  required.  We do not consider $g\coloneqq t x^m y^n$ with $t$ an
  instruction of different type or $s_\dagger$, because it was
  considered above (there are some arrows to $C(1)$ with labels $2$).

  We restrict next to the orbit $\{\state4i\}$ of $G$ and consider the
  case $g=s x^{2m} y^n$. (We do not need to consider cases
  $g = s x^{2m + 1}y^n$ or $g=t x^{m} y^n$, for the first because we
  suppose that if $m = 0$ then $\mm$ does not perform an instruction
  of type IV, and for the second because it was already considered
  above.)
 
  An element $g = s x^{2m} y^n$ fixes $\state4{1}$, $\state4{2}$,
  $\state4{\overline{1}}$ and $\state4{\overline2}$, its outputs are
  respectively $s'(x\epsilon)^m y^n = s' x^m y^n$, $s' x^my^n$,
  $(s')^{-1}x^{-m}y^{-n}$ and $(s')^{-1}x^{-m}y^{-n}$.  Hence there
  are $4$ arrows from $C(s x^{2^m}y^{2^n})$ to $C(s' x^{2^{m-1}}y^{2^n})$,
  all with labels $1$.
 
  We restrict next to the orbit $\{\state7i\}$ of $G$ and perform the
  same computations, the result is in the following table:
 \[\begin{array}{c|c|l|c|}
     g\in G & \text{cycles of } g &\text{output, starting at first element of the cycle} \\
     \hline
     \multirow{3}{*}{$s x^{2m}y^n$} & (\state7{1},\state7{2})& \epsilon s''x^{2m}y^n \\
            & (\state7{3},\state7{4},\state7{\overline4},\state7{\overline3})& s' x^{2m}y^n\epsilon x^{-2m}y^{-n}(s')^{-1}\epsilon = 1\\
            & (\state7{\overline1},\state7{\overline2}) & (s'')^{-1}\epsilon x^{-2m}y^{-n}\\
     \hline
\multirow{3}{*}{$s x^{2m+1}y^n$} & (\state7{1},\state7{3})& \epsilon s' x^{2m+1}y^n  \\
& (\state7{2},\state7{4},\state7{\overline1},\state7{\overline3}) & s'' x^{2m + 1} y^{n} \epsilon x^{-2m-1} y^{-n}(s'')^{-1} \epsilon = 1\\
&   (\state7{\overline2},\state7{\overline4}) & \epsilon x^{-2m-1}y^{-n}(s')^{-1}\epsilon\\

\hline
\end{array}
\]

If $m>0$ then there are therefore two arrows from $C(s x^{2^m}y^{2^n})$
to $C(s''x^{2^m}y^{2^n})$ with labels 2 and an arrow to $C(1)$ with
label $4$; if $m=0$ then there are two arrows from
$C(s x^{2^m}y^{2^n})$ to $C(s''x^{2^m}y^{2^n})$ with label 4 and an
arrow to $C(1)$ with label $4$.
 
The orbits $\{\state5i\}$ and $\{\state8i\}$ are investigated in the
same way as $\{\state4i\}$ and $\{\state7i\}$ respectively.
\end{proof}

\subsection{Proof of Theorem~\ref{thm:torsion_uniform}}
Consider a universal Minsky machine $\mm_u$, namely one that emulates
arbitrary Turing machines encoded in an integer $n$, when started in
state $(s_*,0,n)$. The set of $2^n$ such that $\mm_u$ halts when
started in state $(s_*, 0, n)$ is not recursive, see
Proposition~\ref{prop:minsky}. Therefore,
Theorem~\ref{thm:torsion_uniform} follows by considering in the group
$\langle\Phi_{\mm_u} \rangle$ the elements $s = s_* x$ and $t = y$.

\subsection{Proof of Theorem~\ref{thm:torsion_orbit}}
Let $\mm$ be a Minsky machine with stateset $S_0$. Without loss of
generality, we assume that all instructions are of type III,IX,X.

We associate to it the transducer with stateset
$Q:=S_0\sqcup\{\epsilon, x,y\}$ and alphabet
\[A =\{0, \state3i, \state9j, \state{10}j \mid i = 1, 2; \; j = 1,
\dots, 4\}.\]

The structure of the transducer is given by its map
$\Phi_\mm\colon A\times Q \to Q\times A$.

The state $\epsilon$ is the identity, and
$\Phi_\mm (a, \epsilon) = (\epsilon, a)$ for all $a\in A$.

\begin{itemize}
\item for all instructions $(s_i,m,n)\mapsto(s'_i,m+1,n+1)$ of type III we have
  \[\begin{array}{cr|ccc|}
    & & \multicolumn{3}{c|}{\text{input letter}}\\
      & & 0 & \state31 & \state32 \\ \hline
      \multirow{3}{*}{\rotatebox{90}{in state}} 
      & x & (\epsilon, 0) & (x, \state31) & (x, \state32)\\
      & y & (\epsilon, 0) & (y, \state31) & (y, \state32) \\
      & s_i & (s_i', \state31) & (\epsilon, \state32) & (\epsilon, 0)\\
      \hline
    \end{array}\]
  and every instruction $t$ of another type acts as $\Phi_\mm(t, \state3\ell) = (\state3\ell, t)$.

\item for all instructions
  $(s_j,m,n)\mapsto m=0\;?\;(s'_j,m,n):(s''_j,m-1,n)$ of type IX we have
\[\begin{array}{cr|ccccc|}
 & & \multicolumn{5}{c|}{\text{input letter}}\\
 & & 0 & \state91 & \state92 & \state93 & \state94 \\ \hline
\multirow{3}{*}{\rotatebox{90}{in state}} 
& x & (\epsilon, 0)& (\epsilon, \state92)& (\epsilon, \state91)& (x, \state94)& (\epsilon, \state93)\\
& y & (\epsilon, 0)& (\epsilon, \state91)& (\epsilon, \state92)& (y, \state93)& (y, \state94)\\
& s_j & (\epsilon, \state91)& (s''_j, \state94)& (s'_j, \state93)& (s'_j, \state92)& (\epsilon, 0)\\
\hline
  \end{array}\]
and every instruction $t$ of another type acts as $\Phi_\mm(\state9\ell, t) = (t, \state9\ell)$.

\item The same applies for every instruction
  $(s_k,m,n)\mapsto n=0\;?\;(s_k',m,n):(s_k'',m,n-1)$ of type X, with
  the roles of $x$ and $y$ switched.

\item for all $a \in A$ we have $\Phi_\mm(a, s_\dagger) = (\epsilon, a)$.
\end{itemize}

We claim that the orbit of $0^\infty$ under $s_* x y$ is finite if and
only if the machine $\mm$ stops.

Set $G = \langle\Phi_\mm\rangle$.  We construct an integer-labeled,
directed graph whose vertices are elements of $G$.  For $g\in G$
consider its action on $A$ and the minimal $p_g$ such that $g^{p_g}$
fixes $0$, i.e., $0\cdot g^{p_g} = g' \cdot 0$. In our graph we put an
edge $g\to g'$ with label $p_g$ on it.

The size of the the orbit of $0^\infty$ under $s_* x y$ is a finite
number or $\infty$ and it is equal to product of the labels along the
path starting at $s_* x y$.

We claim that for any instruction $(s, m, n) \to (s', m', n')$ there
is an edge from $s x^{2^m}y^{2^n}$ to $s' x^{2^{m'}}y^{2^{n'}}$ with
label 3, and an edge from $s_\dagger x^{2^m}y^{2^n}$ to $1$.  This is
checked on the dual Moore diagram of $\Phi_\mm$, see
Figure~\ref{fig:torsion_orbitmoore}:

\begin{figure}[h]
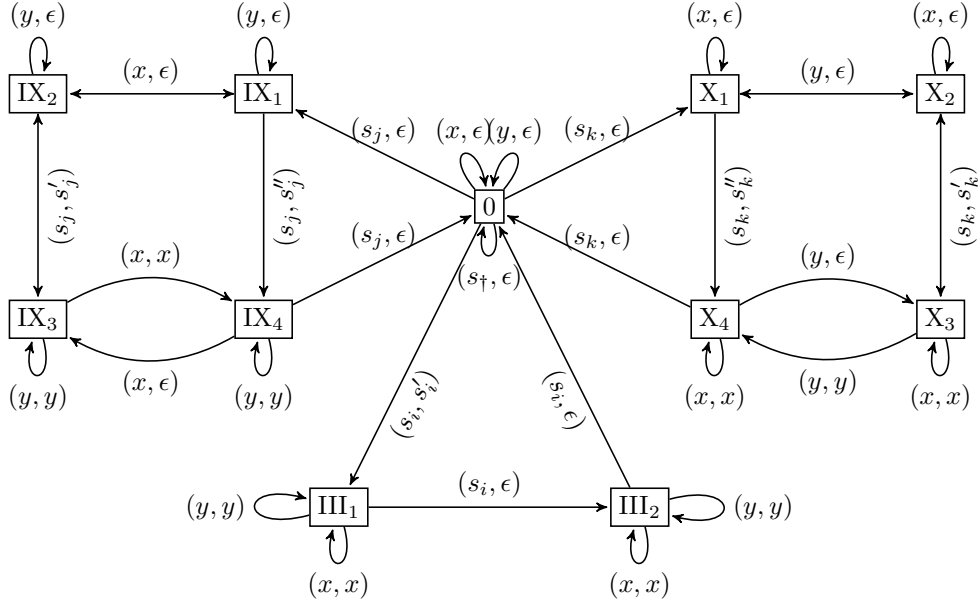

  \[\begin{dualmoore}
\node[state] (0) at (0,0) {0};

\node[state] (9_1) at (-3,1.5) {$\state91$};
\node[state] (9_2) at (-6,1.5) {$\state92$};
\node[state] (9_3) at (-6,-1.5) {$\state93$};
\node[state] (9_4) at (-3,-1.5) {$\state94$};

\node[state] (10_1) at (3,1.5) {$\state{10}1$};
\node[state] (10_2) at (6,1.5) {$\state{10}2$};
\node[state] (10_3) at (6,-1.5) {$\state{10}3$};
\node[state] (10_4) at (3,-1.5) {$\state{10}4$};

\node[state] (3_1) at (-2,-4) {$\state{3}1$};
\node[state] (3_2) at (2,-4) {$\state{3}2$};

\path (0) edge [in=80,out=50,loop] node [above] {$(y,\epsilon)$} ()
 edge [in=100,out=130,loop] node [above] {$(x,\epsilon)$} ()
 edge node [below] [below] {\rotatebox{65}{$(s_i, s_i')$}}  (3_1)
 edge [loop below] node {$(s_\dagger, \epsilon)$} ()
 (3_1) edge node [above] {$(s_i, \epsilon)$} (3_2)
   edge [loop left] node [] {$(y,y)$} () 
   edge [loop below] node [] {$(x,x)$} ()
 (3_2) edge node [below] {\rotatebox{-65}{$(s_i, \epsilon)$}} (0)
  edge [loop right] node [] {$(y,y)$} () 
   edge [loop below] node [] {$(x,x)$} () ;
 
 \path (0) edge  node [above] {$(s_k, \epsilon)$} (10_1)
 (10_1) edge [<->] node [above] {$(y, \epsilon)$} (10_2)
 edge node [] {\rotatebox{90}{$(s_k, s_k'')$}} (10_4)
  edge [loop above] node [above] {$(x,\epsilon)$} ()
 (10_2) edge [<->] node [] {\rotatebox{90}{$(s_k, s_k')$}} (10_3)
  edge [loop above] node [above] {$(x,\epsilon)$} ()
 (10_4) edge [bend left] node [above] {$(y, \epsilon)$} (10_3)
  edge [loop below] node [] {$(x,x)$} ()
 edge node [above] {$(s_k, \epsilon)$} (0)
 (10_3)  edge [loop below] node [] {$(x,x)$} ()
 edge [bend left] node [below] {$(y,y)$} (10_4);

 \path (0) edge  node [above] {$(s_j, \epsilon)$} (9_1)
 (9_1) edge [<->] node [above] {$(x, \epsilon)$} (9_2)
 edge node [] {\rotatebox{90}{$(s_j, s_j'')$}} (9_4)
  edge [loop above] node [above] {$(y,\epsilon)$} ()
 (9_2) edge [<->] node [] {\rotatebox{90}{$(s_j, s_j')$}} (9_3)
  edge [loop above] node [above] {$(y,\epsilon)$} ()
 (9_4) edge [bend left] node [below] {$(x, \epsilon)$} (9_3)
  edge [loop below] node [] {$(y, y)$} ()
 edge node [above] {$(s_j, \epsilon)$} (0)
 (9_3)  edge [loop below] node [] {$(y, y)$} ()
 edge [bend left] node [above] {$(x, x)$} (9_4);
  \end{dualmoore}
  \]
  \caption{The dual Moore diagram of $\Phi_\mm$, used in the proof of
    Theorem~\ref{thm:torsion_orbit}}
  \label{fig:torsion_orbitmoore}
\end{figure}

We first note that $x$ and $y$ commute in $G$.  If $g = s_i x^my^n$ and
$s_i$ is an instruction of type III, then the orbit of $0$ under the
action of $g$ is $(0, \state31, \state 32)$. There is an edge labeled
$3$ from $g$ to $s_i' x^my^n\epsilon x^m y^n = s_i' x^{2m}y^{2n}$.

Consider next $s_j$ an instruction of type IX. There are two cases.
if $g = s_j x y^n$ then the orbit of $0$ is $(0, \state92, \state94)$ and
the output is $\epsilon s_j' x y^n\epsilon$; if $g = s_j x^{2m}y^n$ then
the orbit of $0$ is $(0, \state91, \state94)$ and the output is
$\epsilon s_j''x^my^n\epsilon$.

This means that if $m=0$ then there is an edge labeled $3$ from
$s_j x^{2^m}y^{2^n}$ to $s_j' x^{2^m}y^{2^n}$, and if $m>0$ then there
is an edge labeled $3$ from $s_j x^{2^m}y^{2^n}$ to
$s_j''x^{2^{m-1}}y^{2^n}$.

The same naturally applies to instructions of type IX. Finally, for
all $m,n\in\N$ the element $s_\dagger x^my^n$ fixes $0$, and there is
an edge labeled $1$ from $s_\dagger x^my^n$ to $1$.

\subsection{Contracting automata: proof of Theorem~\ref{thm:contracting}}\label{ss:contract}
We finally explain how to make the transducers $\Phi_\mm$ of the
previous subsections contracting. We expand the definition
from the introduction:
\begin{defn}[\cite{nekrashevych:ssg}*{Definition~2.11.1}]\label{defn:contracting}
  Let $G=\langle\Phi\rangle$ be a self-automata group with
  $\Phi\colon A\times S\to S\times A$ and
  $\overline\Phi\colon A\times G\to G\times A$. For $g\in G$ and
  $u\in A^*$, the \emph{state} $g@u$ is the unique element of $G$ such
  that $(u v)^g=u^g v^{g@u}$; namely, the action of $g$ on tails of
  sequences starting with $u$.

  The group $G$ is \emph{contracting} if there exists a finite subset
  $N\subseteq G$ such that, for all $g\in G$, there exists $n(g)\in\N$
  such that $g@u\in N$ whenever $|u|\ge n(g)$.
\end{defn}

The minimal subset $N$ satisfying the definition is called the
\emph{nucleus}. In particular, one has $n@a\in N$ for all
$(a,n)\in A\times N$, so $\Phi$ induces an automaton still written
$\Phi\colon A\times N\to N\times A$. Up to replacing $S$ by
$\widetilde S\coloneqq S\cup N$ and $A$ by $\widetilde A\coloneqq A^n$
for $n$ larger than $\max_{g\in \widetilde S^2}n(g)$, thus making the
transducer process $n$ letters at a time, one may also assume
\[\overline\Phi(\widetilde A\times \widetilde S^2)\subseteq \widetilde
  S\times\widetilde A.
\]
A transducer $\Phi$ with this extra property is called \emph{nuclear}.

Note that is is probably undecidable whether a self-similar group
$\langle\Phi\rangle$ is contracting; but it is easy to decide whether
a transducer $\Phi\colon A\times S\to S\times A$ is nuclear: by
minimizing the composite transducer
$A\times S^3\to S\times A\times S^2\to S^2\times A\times S\to
S^3\times A$, find the set $\mathcal R$ of all words
$s_1s_2s_3\in S^3$ that equal $1$ in $G$. Then $\Phi$ is nuclear if
and only if for all $a\in A,s_1,s_2\in S$ there exists $s_3\in S$ such
that if $\Phi(a,s_1)=(s'_1,b)$ and $\Phi(b,s_2)=(s'_2,c)$ then
$s'_1 s'_2 s_3^{-1}\in\mathcal R$.  The more precise form of
Theorem~\ref{thm:contracting} is:
\begin{thm}\label{thm:nuclear}
  There is no algorithm that, given a nuclear transducer
  $\Phi\colon A\times S\to S\times A$ and $a\in A$ and $s\in S$,
  determines the cardinality of the orbit of $a^\infty$ under
  $\langle s\rangle$.

  There is no algorithm that, given a nuclear transducer
  $\Phi\colon A\times S\to S\times A$ and $s\in S$, determines the
  order of $s$ in $\langle\Phi\rangle$.
\end{thm}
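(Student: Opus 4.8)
The passage from a \emph{contracting} group to a \emph{nuclear} transducer is the routine batching recalled above (replace $A$ by $\widetilde A=A^n$ and $S$ by $\widetilde S=S\cup N$), and it needs nothing beyond the existence of a finite nucleus. So the whole task is to exhibit, for the transducers of Theorems~\ref{thm:torsion} and~\ref{thm:torsion_orbit}, a finite set $N$ as in Definition~\ref{defn:contracting}; the plan is to adjust those transducers, leaving the reduction from the halting problem untouched, until such an $N$ is visible.

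First I would read the obstruction off the dual Moore diagrams. The state letters $s$ are harmless, since their states stay inside the finite set $S_0^{\pm1}\cup\{\epsilon\}$, and on a conditional decrement (types IX, X, as in Figure~\ref{fig:torsion_orbitmoore}) the generators $x,y$ already act as binary adding machines or trivially — e.g.\ $x^2@\state41=x$ on the decrement letters, so $x^{2^k}@u$ contracts into $\{1,x^{\pm1}\}$. After normalising both machines to the minimal instruction set $\{\mathrm{III},\mathrm{IX},\mathrm{X}\}$, the \emph{only} surviving obstruction is the increment of type III, where $x@\state31=x$ with $\state31^x=\state31$: there $x$ is constant, so $x^{m}@(\state31^{k})=x^{m}$ for every $k$, and the infinitely many powers $x^m$ would all be forced into $N$. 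It is exactly this self-loop $z@a=z$ that has to be destroyed.

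The remedy I propose is to make $x$ and $y$ genuine adding machines on the type-III letters as well, so that every read strictly halves the exponent, and to absorb the resulting loss by lengthening the cycle that carries the instruction. A halving read on a cycle of length $\ell$ multiplies an exponent by $\ell/2$: a fixed letter ($\ell=1$) halves, which is the decrement; a $2$-cycle preserves the exponent, which is what a test wants; and a $4$-cycle \emph{doubles} it, which is precisely the increment $(s,M,N)\to(s',M{+}1,N{+}1)$. Concretely one enlarges the block of type-III letters just enough that $x,y$ act there as independent adding machines while the state letter $s$ splices their two $2$-cycles into a single $4$-cycle; the four halved reads then contribute $x^{2m}y^{2n}=x^{2^{M+1}}y^{2^{N+1}}$, so the edge on conjugacy classes is still $C(s\,x^{2^M}y^{2^N})\to C(s'\,x^{2^{M+1}}y^{2^{N+1}})$, now carrying the label $4$ in place of $2$. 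Replacing a label $2$ by $4$ alters neither the finiteness of a product of labels along a path nor the resulting least-common-multiple, so the order and orbit-size characterisations of the two propositions survive verbatim.

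With $x,y$ contracting everywhere, each $g@u$ with $|u|$ large is a product of a nucleus-bound $x$-part, a nucleus-bound $y$-part and one of the finitely many $s$-states, so $N$ is contained in the finite set $\{\sigma\,x^{\varepsilon}y^{\eta}\mid\sigma\in S_0^{\pm1}\cup\{\epsilon\},\ \varepsilon,\eta\in\{-1,0,1\}\}$. The modified transducers are therefore contracting, batching renders them nuclear, and both statements of Theorem~\ref{thm:nuclear} follow since the underlying reductions are unchanged. The genuine difficulty, I expect, is concentrated entirely in this increment gadget: one must check that forcing a \emph{contracting} $x$ to halve on each read can be reconciled with the counter's need to double — which is what the length-$4$ cycle buys — and that the enlarged gadget introduces no new fixed point $z@a=z$ that would re-inflate the nucleus.
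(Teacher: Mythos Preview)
Your plan diverges from the paper's. Rather than redesigning gadgets, the paper leaves $\Phi_\mm$ untouched and \emph{composes} it with auxiliary thinning transducers $\Phi_0,\Phi_1,\dots,\Phi_\ell$ over fresh binary alphabets: each $\Phi_i$ (for $i\ge1$) erases every second occurrence of $s_i$, and $\Phi_0$ does the same simultaneously for $x$, $y$ and all $s_i$. Contraction of the composite $\Phi'=\Phi\circ\Phi_0\circ\cdots\circ\Phi_\ell$ then follows from the uniform path criterion of Lemma~\ref{lem:pathcontracting} (every long enough reduced path in the dual Moore diagram outputs an $\epsilon$), and Proposition~\ref{prop:contracting} checks that on inputs $(s_i x^m y^n)^{4t}$ the composite still outputs $s_i' x^{m'} y^{n'}$, a conjugate thereof, or $1$; so the labelled graph on conjugacy classes (resp.\ on elements) is preserved up to multiplying labels by $4$ and adding harmless edges to $C(1)$. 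This buys modularity: no instruction gadget needs to be touched, and contraction is verified once, structurally, rather than letter by letter.

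Your gadget-redesign route has a concrete gap. The claim that the only obstruction lies in type~III is false: in Figure~\ref{fig:torsion_orbitmoore} the type-IX letters $\state93$ and $\state94$ carry self-loops $(y,y)$, so $y^m@\state93=y^m$ for every $m$, and symmetrically $x$ has self-loops on the type-X block. (Your cited example $x^2@\state41=x$ is drawn from the \emph{other} transducer, Figure~\ref{fig:torsionmoore}, whose instruction set $\{\mathrm{III},\mathrm{IV},\mathrm{V},\mathrm{VII},\mathrm{VIII}\}$ you propose to abandon --- and there too $y$ has a $(y,y)$ self-loop at $\state41$.) Thus your cycle-lengthening fix must also be applied to the decrement-and-test gadgets, where one counter's exponent must pass through unchanged while the other halves and the branch on zero is resolved; reconciling all three constraints in a single contracting gadget is exactly the design problem you have deferred. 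Separately, your nucleus bound $\{\sigma x^{\varepsilon}y^{\eta}\mid\sigma\in S_0^{\pm1}\cup\{\epsilon\},\ \varepsilon,\eta\in\{-1,0,1\}\}$ tacitly assumes that every state of every group element factors in this shape, which needs proof for arbitrary reduced words in $S$; the paper's Lemma~\ref{lem:pathcontracting} handles this uniformly without any appeal to word form.
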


Note that the group is not changed by these operations of replacing
$S$ by $N$ and $A$ by $A^n$. If $\Phi$ is nuclear, then
$\langle\Phi\rangle$ is contracting in the sense of the introduction,
since $|g'|\le(|g|+1)/2$ in the word metric defined by
$N$. Conversely, if $|g'|\le\lambda|g|+C$ then one may take
$N=\{g\in G\mid C/(1-\lambda)\ge|g|\}$ to see that $G$ is contracting
in the sense of Definition~\ref{defn:contracting}.

\begin{lem}\label{lem:pathcontracting}
  Let $\Phi\colon A\times S\to S\times A$ be a transducer. If there is
  a constant $N\in\N$ such that every reduced path of length $\ge N$
  in the dual Moore diagram of $\Phi$ contains an $\epsilon$ letter
  along its output, then $\langle\Phi\rangle$ is contracting.
\end{lem}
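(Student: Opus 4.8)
The plan is to track how the length of a word representative of the state $g@u$ decreases as $u$ grows, reading states off the dual Moore diagram $\Delta$. Recall that if $g$ is represented by a word $s_1\cdots s_k\in S^*$, then the path in $\Delta$ starting at a letter $a$ with input $s_1\cdots s_k$ ends at $a^g$ and has output label exactly $g@a$; moreover this path has exactly $k$ edges. Since $g@(uv)=(g@u)@v$, I would compute $g@(a_1\cdots a_m)$ by iterating this reading: starting from a freely reduced word $w_0$ for $g$, read it from $a_1$, freely reduce the resulting output word to get $w_1$ (a reduced representative of $g@a_1$), read $w_1$ from $a_2$, and so on, producing reduced words $w_i$ representing $g@(a_1\cdots a_i)$. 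Reducing at each stage is legitimate because $h@a$ depends only on the group element $h$ and not on the word chosen for it, so any representative may be replaced by a freely reduced one; this is what keeps every path we read a \emph{reduced} path, so that the hypothesis applies at each step.

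First I would prove the counting estimate: a reduced path of length $k$ in $\Delta$ (a path is \emph{reduced} when its sequence of input labels forms a freely reduced word in $S$) carries at least $\lfloor k/N\rfloor$ outputs equal to $\epsilon$. Indeed, any contiguous sub-path of a reduced path is again reduced, so cutting the path into $\lfloor k/N\rfloor$ consecutive blocks of $N$ edges and applying the hypothesis to each block yields at least one $\epsilon$-output per block. Consequently, after deleting the $\epsilon$-letters, the output word of such a path has at most
\[
k-\left\lfloor\frac kN\right\rfloor\le\Bigl(1-\frac1N\Bigr)k+1
\]
letters of $S$.

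Applying this with $k=|w_{i-1}|$ and setting $\lambda\coloneqq 1-1/N<1$, the freely reduced word $w_i$ satisfies $|w_i|\le\lambda\,|w_{i-1}|+1$, since free reduction only shortens. Iterating, with $|w_0|=|g|$, gives
\[
|w_m|\le\lambda^m|g|+\sum_{j=0}^{m-1}\lambda^j\le\lambda^m|g|+\frac1{1-\lambda}=\lambda^m|g|+N.
\]
Thus as soon as $m\ge n(g)$, with $n(g)$ chosen so that $\lambda^m|g|<1$, one has $|w_m|\le N$. Taking $\mathcal N\coloneqq\{h\in\langle\Phi\rangle\mid h\text{ is represented by a word of length }\le N\text{ over }S\}$, which is finite because $S$ is finite, we conclude $g@u\in\mathcal N$ whenever $|u|\ge n(g)$, which is exactly the contraction condition of Definition~\ref{defn:contracting}.

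The main obstacle is the interplay between free reduction and the ``reduced path'' hypothesis: the output word read off $\Delta$ need not itself be reduced, yet the hypothesis only controls reduced paths. I expect the crux of a careful write-up to be the verification that one may legitimately reduce each intermediate representative $w_i$—because the state $h@a$ is an invariant of the group element $h$—so that every path subsequently read remains reduced and the block-counting estimate genuinely applies at every stage.
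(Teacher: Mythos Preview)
Your argument is correct and follows essentially the same route as the paper: both partition a reduced word of length $k$ into $\lfloor k/N\rfloor$ blocks of length $N$, apply the hypothesis to each block to force at least one $\epsilon$ per block, and deduce the one-step contraction inequality $|g@a|\le(1-1/N)|g|+1$. The only difference is cosmetic: the paper stops at that inequality and invokes the equivalence with Definition~\ref{defn:contracting} stated earlier, whereas you iterate explicitly and are more careful about re-reducing the intermediate words so that the ``reduced path'' hypothesis remains applicable at every stage.
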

\begin{proof}
  Consider $g\in\langle\Phi\rangle$, and represent it by a word
  $w\in S^*$ of length $\ell=|g|$. Factor $w=w_1\dots w_t$ with
  $|w_i|=N$ for all $i=1,\dots,t-1$ and $|w_t|<N$.

  Then every $g'$ as in~\eqref{eq:action} is computed by following, in
  the dual Moore diagram, the path starting at $a_1$ with label $w$ on
  its input. The output label along that path is $g'$, and by
  hypothesis each time a segment $w_i$ is read, for $i<t$, an
  $\epsilon$ letter is produced for $g'$; so $|g'|\le\ell-t+1$. Now
  $t=\lceil\ell/N\rceil$, so
  \[|g'|\le\ell-\lceil\ell/N\rceil+1\le(1-1/N)|g|+1.\qedhere\]
\end{proof}

We shall modify the transducers $\Phi_\mm$ by composing them with
appropriate machines. We recall the general definition: let
$\Phi\colon A\times S\to S\times A$ and
$\Psi \colon B\times S\to S\times B$ be transducers with same stateset
$S$. Their \emph{composition} is the transducer $\Phi\circ\Psi$ with
alphabet $A\times B$, given by
\[\Phi\circ\Psi\colon (A\times B)\times S=A\times(B\times S)\overset{A\times\Psi}\longrightarrow A\times(S\times B)=(A\times S)\times B\overset{\Phi\times B}\longrightarrow(S\times A)\times B=S\times(A\times B).\]

We are given a transducer $\Phi$ with stateset
$S=\{s_1,\dots,s_\ell,x,y\}$ and alphabet $A$. We write
$G=\langle\Phi\rangle$, and freely identify words in $S^*$ with their
value in $G$. We require that $x,y$ commute.

For every $i\in\{1,\dots,\ell\}$, consider the transducer $\Phi_i$ with
alphabet $A_i = \{0,1\}$ and transitions
$\Phi_i(a,q)=(a=0 \;?\; q : \epsilon,q=s_i \;?\; 1-a : a)$.

Note (by drawing the dual Moore diagram and deleting the transitions
with $\epsilon$ output) that the only paths with input and output of
same length are of the form $s_i^{-a} w s_i^b$ for some
$a,b\in\{0,1\}$ and $w$ a word not involving $s_i$. 

Note also that for a word $w$ of form $s_j x^my^n$ 
\begin{enumerate}
\item if $i = j$ then $\Phi_i(0, w^2) = (w, 0)$ and
  $\Phi_i(1, w^2) = (w', 1)$ with $w'$ conjugate to $w$;
\item if $i \neq j$ then $\Phi_i(0, w) = (w, 0)$ and
  $\Phi_i(1, w) = (\epsilon, 1)$.
\end{enumerate}

Consider also a transducer $\Phi_0$ with alphabet $A_0=\{0,1\}^3$ and
transitions
\begin{align*}
  \Phi_0((a,b,c),s_i)&=(c=0\; ?\; s_i : \epsilon,(a,b,1-c)) \text{ for all $i$};\\
  \Phi_0((a,b,c),x)&=(a=0\; ?\; x\; : \epsilon,(1-a,b,c));\\
  \Phi_0((a,b,c),y)&=(b=0\; ?\; y : \epsilon,(a,1-b,c)).
\end{align*}
Note that, in the dual Moore diagram of $\Phi_0$, all paths with input
label of the form $s_i^{-a} x^m y^n s_j^b$ have shorter output label as
soon as $|m|+|n|\ge3$. Note also that if $w$ is a word of the form
$s_i x^m y^n$ then for all $(a,b,c)\in A_0$ we have
$\Phi_0((a,b,c),w^2)=(w',(a,b,c))$ for some permutation $w'$ of $w$;
so in particular $w'$ is conjugate to $w$. Furthermore, for
$(a,b,c)=(0,0,0)$ we get $w'=w$ in $G$.

\begin{prop}\label{prop:contracting}
  Under the hypotheses above, the transducer
  $\Phi'\coloneqq\Phi\circ\Phi_0\circ\Phi_1\circ\cdots\circ\Phi_\ell$
  generates a contracting group, and whenever we have
  $\Phi(a,(s_i x^m y^n)^t)=(s'_i x^{m'} y^{n'},a)$ in the original
  transducer we have for all $j\in\{0,1\}^{\ell+3}$ the relation
  $\Phi'((j,a),(s_i x^m y^n)^{4t})=(w ,(j,a))$, with $w$ either equal
  to $1$ or conjugate to $s'_i x^{m'} y^{n'}$. Furthermore, if
  $j = 0^{\ell + 3}$ then $w = s'_i x^{m'} y^{n'}.$
\end{prop}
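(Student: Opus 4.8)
The plan is to prove the two assertions separately: contraction via Lemma~\ref{lem:pathcontracting}, and the functional identity by pushing the input word $(s_i x^m y^n)^{4t}$ through the chain of factors.

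\emph{Contraction.} Write $w=s_i x^m y^n$. By Lemma~\ref{lem:pathcontracting} it suffices to exhibit a constant $N$ such that every reduced path of length $\ge N$ in the dual Moore diagram of $\Phi'$ emits an $\epsilon$ along its output; I claim $N=4$ works. Since a single input letter is carried to the composite output only if no factor deletes it, the whole path is $\epsilon$-free only if no factor ever deletes a letter. I examine the factor $\Phi_0$, whose three parity coordinates flip on every occurrence of $x$, of $y$, and of an $s$-letter respectively, a letter being copied only when the relevant coordinate is $0$. Thus among any two $x$-letters (resp. two $y$-letters, resp. two $s$-letters) of the input at least one is deleted, regardless of the starting coordinate. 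Hence an $\epsilon$-free reduced input carries at most one $x^{\pm1}$, one $y^{\pm1}$ and one $s$-letter, so has length at most three; the remark on $\Phi_i$ reinforces this, since $\epsilon$-freeness through $\Phi_i$ already forces each occurrence of $s_i$ to lie at an end of the path. Consequently every reduced path of length $\ge4$ is shortened, and $\langle\Phi'\rangle$ is contracting.

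\emph{The identity.} I compute $\Phi'((j,a),w^{4t})$ by pushing $w^{4t}=(w^2)^{2t}$ successively through $\Phi_\ell,\dots,\Phi_1$, then $\Phi_0$, then $\Phi$, tracking the auxiliary coordinate. At the matching factor $\Phi_i$, item~(1) shows that each block $w^2$ restores $c_i$ and emits one factor conjugate to $w$ (namely $w$ if $c_i=0$, and $w^{s_i}$ if $c_i=1$); so the output is conjugate to $w^{2t}$ and $c_i$ is restored. A non-matching $\Phi_\ell$ reads a word not involving $s_\ell$, and by item~(2) copies it verbatim when $c_\ell=0$ and annihilates it to $\epsilon$ when $c_\ell=1$. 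Feeding the resulting $(w^2)^t$ through $\Phi_0$, the recorded property $\Phi_0(\cdot,w^2)=(w',\cdot)$ with $w'$ conjugate to $w$ halves the length again, restores the coordinate of $\Phi_0$, and yields a word conjugate to $w^t$. Finally $\Phi$ reads $w^t$ and, by the hypothesis $\Phi(a,w^t)=(s'_i x^{m'}y^{n'},a)$, returns the state $s'_i x^{m'}y^{n'}$ while fixing $a$. Every auxiliary coordinate having been restored and $a$ being fixed, the letter $(j,a)$ is fixed throughout.

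When $j=0^{\ell+3}$ all coordinates stay $0$, every conjugation above is trivial and no factor is annihilated, so the two reductions produce exactly $w^{2t}$ and then $w^t$, and the final state is exactly $s'_i x^{m'}y^{n'}$. For general $j$, comparing with this base case, each coordinate of $j$ equal to $1$ either conjugates the emitted word by the associated generator ($s_i$ at the matching $\Phi_i$, and $x$ or $y$ at the relevant coordinates of $\Phi_0$) or, for a non-matching index, annihilates it to the empty word; so $w$ is conjugate to $s'_i x^{m'}y^{n'}$ or equal to $1$. The main obstacle is exactly this last bookkeeping: once a factor emits $g^{-1}w^{2t}g$ instead of $w^{2t}$, the downstream factors no longer read a literal power of $w$, so one must verify that the conjugation is transported coherently through the remaining factors and through $\Phi$ --- using that $x,y$ commute and that $w^t$ fixes $a$ --- to survive to the final state as an honest conjugacy. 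The clean $j=0^{\ell+3}$ computation, where this difficulty is absent, is what the remainder of the construction actually requires.
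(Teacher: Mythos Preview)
Your argument for the identity tracks the paper's proof: push $(s_i x^m y^n)^4$ through $\Phi_1\circ\cdots\circ\Phi_\ell$ to obtain $(s_i x^m y^n)^2$, a conjugate thereof, or $\epsilon$ (according to the coordinate pattern), then through $\Phi_0$ to halve once more, then through $\Phi$. The bookkeeping worry you flag---that a conjugate rather than a literal power must be fed to the downstream factors---is real but not fatal; the paper is equally brief on this point, and in any case the clean $j=0^{\ell+3}$ computation is what the application actually uses.

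Your contraction argument, however, has a genuine gap. The assertion that in $\Phi_0$ ``a letter is copied only when the relevant coordinate is $0$'', and hence that among any two $x$-letters one is deleted, holds only for \emph{positive} letters: inverting $\Phi_0(0,x)=(x,1)$, $\Phi_0(1,x)=(\epsilon,0)$ gives $\Phi_0(1,x^{-1})=(x^{-1},0)$ and $\Phi_0(0,x^{-1})=(\epsilon,1)$, so $x^{-1}$ survives precisely when the coordinate is $1$. Consequently the reduced word $xyx^{-1}y^{-1}$ passes through $\Phi_0$ at $(0,0,0)$ with output $xyx^{-1}y^{-1}$ and no $\epsilon$; it contains no $s_k$, so each $\Phi_k$ at coordinate $0$ copies it verbatim as well; and on suitable letters of $A$ the original $\Phi$ does the same. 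Thus $N=4$ fails, and in fact $(xyx^{-1}y^{-1})^k$ survives for every $k$, so no finite $N$ can work by your route. The paper's proof meets this obstacle head on: it first uses the $\Phi_i$ to restrict to words of the shape $s_i^{-a}w(x,y)s_j^b$, and then explicitly invokes the relation $[x,y]=1$ in $G$---a hypothesis you never use---so that $w(x,y)$ may be taken in the form $x^m y^n$, on which the parity argument for $\Phi_0$ genuinely bites once $|m|+|n|\ge3$.
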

\begin{proof}
  After applying the transducers $\Phi_1,\dots,\Phi_\ell$, the only
  words that don't get shortened are of the form
  $s_i^{-a} w(x,y) s_j^b$ for some $i,j\in\{1,\dots,\ell\}$ and some
  $a,b \in \{0,1\}$. These get shortened by $\Phi_0$ as soon as
  $|w|\ge3$, using the fact that $x$ and $y$ commute. It follows that
  $\langle\Phi'\rangle$ is contracting.

  Consider the transitions of $(s_i x^m y^n)^4$ in transducer
  $\Phi_1\circ\cdots\circ\Phi_\ell$. On input letter $0^\ell$ it
  produces $(s_i x^m y^n)^2$, on input letter $0\cdots1\cdots0$ with
  the `$1$' in position $i$ it produces a conjugate of
  $(s_i x^m y^n)^2$ and on all other input letters it produces
  $\epsilon$. Feed then $(s_i x^m y^n)^2$ to transducer $\Phi_0$; on
  input letter $000$ it produces $s_i x^m y^n$ and on all other input
  letters it produces a conjugate of $s_i x^m y^n$. Feed finally
  $s_i x^m y^n$ to $\Phi$ to conclude the proof.
\end{proof}

We are ready to finish the proof of Theorem~\ref{thm:nuclear}.  We
constructed an integer-labeled graph for a transducer $\Phi$, whose
vertices are elements of $G$ for Theorem~\ref{thm:torsion_orbit} or
symmetric conjugacy classes for Theorem~\ref{thm:torsion}.

By Proposition~\ref{prop:contracting}, the transducer $\Phi'$ is
contracting. Let us check that the order problems for
$\langle\Phi\rangle$ and for $\langle\Phi'\rangle$ are equivalent.

A graph for $\Phi'$ will have the same set of vertices as the graph
for $\Phi$, and Proposition~\ref{prop:contracting} shows that this new
graph has the same set of outgoing edges for each element of form
$s_i x^m y^n$, with labels multiplied by $4$ and, possibly, some new
edges to $1$ (or to $C(1)$).  Since in the old graph there were no
loops at non-identity elements, $s_* x y$ has infinite order in
$\langle\Phi'\rangle$ if and only if it has infinite order in
$\langle\Phi\rangle$, and the orbit of $(0,0^{\ell+3})^\infty$ is
infinite under the action of $s_* x y\in\langle\Phi'\rangle$ if and
only if the orbit of $0^\infty$ is infinite under the action of
$s_* x y\in\langle\Phi\rangle$.

Finally, by replacing the stateset $S$ by $\widetilde S=S\cup N$ and
$A$ by $\widetilde A=A^n$, we may assume that $\Phi'$ is nuclear.

\section{Outlook}
We proved in this article the undecidability of the order problem for
automata groups, namely groups of transformations generated by a
transducer.

If the transducer belongs to a restricted class, it may well be that
the order problem becomes decidable. Of particular importance are:
\begin{list}{}{\leftmargin=0mm\itemsep=1ex}
\item\textbf{Transducers of polynomial growth.} In a transducer $\Phi$
  (represented by a graph as in Figure~\ref{fig:grigorchuk}), let
  $\alpha(n)$ denote the number of paths of length $n$ that end in a
  non-identity state. If $\alpha(n)$ is a bounded function (as is the
  case for the Grigorchuk group), then the order problem is solvable
  in $\langle\Phi\rangle$, see~\cite{bondarenko-b-s-z:conjugacy}. What
  happens if $\alpha(n)$ is bounded by a linear function? or by a
  polynomial of degree $d$? The groups generated by such transducers
  have been considered by Sidki~\cite{sidki:acyclicity}.
\item\textbf{Reset transducers.} These are transducers $\Phi$ with
  $\Phi(a,s)=(\phi(a),\psi(a,s))$ for some functions $\phi,\psi$;
  namely, the state reached by the transducer is independent of the
  original state. These transducers are intimately connected to
  tilings, by Kari's construction~\cite{kari:nilpotency}. Gillibert
  proved in~\cite{gillibert:finiteness} that the order problem is
  unsolvable for semigroups of reset automata. Is it solvable in
  groups of reset automata?
\item\textbf{Reversible transducers.} These are transducers whose dual is
  invertible; they should be related to reversible Turing or Minsky
  machines. Is the order problem solvable for groups generated by
  reversible automata?
\item\textbf{Bireversible transducers.} These are transducers $\Phi$
  such that all $8$ transducers obtained from $\Phi$ by inverting or
  permuting the stateset and alphabet remain transducers; they give
  another point of view on square complexes (by tiling the plane with
  squares whose labels are $(a,s,a',s')$ when $\Phi(a,s)=(s',a')$).
\end{list}

We expect it to be undecidable whether a functionally recursive group
is actually an automata group (for a larger generating set), whether
an automata group is contracting, and even whether a contracting group
is finite. Again, the related questions for semigroups are known to be
undecidable by constructions in or similar
to~\cite{gillibert:finiteness}.

\begin{bibsection}
\begin{biblist}
\bibselect{math}
\end{biblist}
\end{bibsection}

\end{document}